\title[Fourier transforms of orbital integrals]
{Fourier transforms of semisimple orbital integrals on
the Lie algebra of $\protect\SL_2$}
\author{Loren Spice}
\email{l.spice@tcu.edu}
\address{%
Department of Mathematics \\
Texas Christian University \\
TCU Box 298900 \\
2840 W.\ Bowie St \\
Fort Worth, TX 76109%
}
\subjclass[2000]{Primary 22E50, 22E35}
\keywords{%
\(p\)-adic group, orbital integral, special function%
}
\begin{document}

\begin{abstract}
The Harish-Chandra--Howe local character expansion expresses
the characters of reductive, \(p\)-adic groups in terms of
Fourier transforms of nilpotent orbital integrals on their
Lie algebras,
and
Murnaghan--Kirillov theory expresses many characters of
reductive, \(p\)-adic groups in terms of Fourier transforms
of semisimple orbital integrals (also on their Lie algebras).
In many cases, the evaluation of these Fourier transforms
seems intractable;
but, for \(\SL_2\), the nilpotent orbital integrals
have already been computed
\cite{debacker-sally:germs}*{Appendix A}.
In this paper, we
use a variant of Huntsinger's integral formula,
and the theory of \(p\)-adic special functions,
to compute semisimple orbital integrals.
\end{abstract}

\maketitle

\setcounter{tocdepth}2
\tableofcontents

\section{Introduction}

\subsection{History}
\label{sec:history}

Harish-Chandra's \(p\)-adic Lefschetz principle suggests
that results in real harmonic analysis should have analogues
in \(p\)-adic harmonic analysis.  This principle has had too
many successes to list, but it is interesting
that the paths to results in the Archimedean and
non-Archimedean settings are often different.
One striking manifestation of this is that the
characters for the discrete series of
real groups were found \emph{before} the representations to
which they were associated were constructed
(see \cite{hc:disc-series-2}*{Theorem 16}
and \cite{schmid:homogeneous}*{Theorem 4});
whereas, in the \(p\)-adic setting, although we now have
explicit constructions of many representations
(see \cites{
	adler:thesis,
	bushnell-kutzko:gln-book,
	bushnell-kutzko:sln-1,
	bushnell-kutzko:sln-2,
	corwin:division-alg-all,
	howe:div-alg,
	morris:classical-2,
	moy:thesis,
	moy-prasad:k-types,
	stevens:classical-sc,
	yu:supercuspidal
}, among many others),
explicit character tables are still very rare.

This scarcity is of particular concern because,
as suggested by Sally, it should be the case that
``characters tell all''
\cite{sally-spice:characters}*{p.~104}.
Note, for example,
the recent work of Langlands \cite{langlands:transfert},
which uses in a crucial way
(see \S1.d \loccit) the character formul\ae\ of
\cite{sally-shalika:characters}
to show the existence,
but only for \(\SL_2\),
of a transfer map dual to the transfer
of stable characters.
It seems likely that one of the main obstacles to
extending the results of \cite{langlands:transfert}
to other groups
is the absence of explicit character formul\ae\ for them.

The good news here is that much \emph{is} known about
the behaviour of characters in general.
For example, the Harish-Chandra--Howe local character
expansion \cites{
	hc:queens,
	howe:two-conj,
	debacker:homogeneity
} and Murnaghan--Kirillov theory \cites{
	jkim-murnaghan:charexp,
	jkim-murnaghan:gamma-asymptotic,
	murnaghan:chars-u3,
	murnaghan:chars-sln,
	murnaghan:chars-classical,
	murnaghan:hc,
	murnaghan:chars-gln
} give information about the asymptotics
(near the identity element) of characters of \(p\)-adic
groups in terms of Fourier transforms of orbital integrals
(nilpotent or semisimple) on the Lie algebra,
and many existing character formul\ae\ are stated in terms
of such orbital integrals (see, for example,
\cite{debacker:thesis}*{Theorem 5.3.2},
\cite{spice:thesis}*{Theorems 6.6 and 7.18},
\cite{debacker-reeder:depth-zero-sc}*{Lemma 10.0.4},
and
\cite{adler-spice:explicit-chars}*{Theorem 7.1}).
See also \cite{adler-spice:explicit-chars}*{\S0.1}
for a more exhaustive description of what is known
in the supercuspidal case.

The bad news is that many applications require completely
explicit character tables---in particular, the evaluation of
Fourier transforms of orbital integrals when they appear---%
but that Hales \cite{hales:characters} has shown that
the orbital integrals may themselves be
`non-elementary'.
This term has a technical meaning, but, for our purposes,
it suffices to regard it informally as meaning
`difficult to evaluate'.
(Note, though, that the asymptotic behaviour of orbital
integrals `near \(\infty\)' is understood in all cases; see
\cite{waldspurger:loc-trace-form}*{Proposition VIII.1}.)
Since \(\SL_2\) is both simple enough for
many explicit computations to be tractable
(for example, the Fourier transforms of nilpotent orbital
integrals have already been computed,
in \cite{debacker-sally:germs}*{Appendix A.3--A.4}),
and complicated enough for
interesting phenomena to be apparent
(for example, unlike \(\GL_2\) and
	\(\operatorname{PGL}_2\),
it admits non-stable characters),
it is a natural focus for our investigations.

Another perspective on the behaviour of characters in
the range where Murnaghan--Kirillov theory holds
is offered in
\cite{corwin-moy-sally:gll}*{Theorem 4.2(d)},
\cite{takahashi:gl3}*{Proposition 2.9(2)},
and
\cite{takahashi:gll}*{Theorem 2.5},
where explicit mention of orbital integrals is replaced
(on the `bad shell'---see \S\ref{sec:bad-ram})
by arithmetically interesting sums,
identified in \cites{takahashi:gl3,takahashi:gll} as
Kloosterman sums.
In fact, exponential sums%
---specifically, Gauss sums---%
have long been observed in \(p\)-adic harmonic analysis;
see, for example,
\cite{shalika:thesis}*{\S1.3},
\cite{waldspurger:loc-trace-form}*{\S VIII.1},
\cite{debacker:thesis}*{p.~55},
\cite{corwin-moy-sally:gll}*{Proposition 3.7},
and
\cite{adler-spice:explicit-chars}*{\S5.2}.

The work recorded here was carried out while
preparing \cite{adler-debacker-sally-spice:sl2-chars}, which
provides a proof of the aforementioned
\(\SL_2\) character formul\ae\ \cite{sally-shalika:characters}
by specialising the results of
\cites{
	debacker-reeder:depth-zero-sc,
	adler-spice:explicit-chars
}.  As discussed above, these general results are stated
in terms of Fourier transforms of orbital integrals
(see Definition \ref{defn:mu-hat}); so, in order to obtain
completely explicit formul\ae, it was necessary to evaluate
those Fourier transforms.
The author of the present paper was
surprised to discover that this latter evaluation
reduced to the computation of \term{Bessel functions}
(see \S\ref{sec:Bessel} and Proposition \ref{prop:second-orbital});
but, in retrospect, by the aforementioned \(p\)-adic
Lefschetz principle,
it seems natural that the `special functions'
described in \cite{sally-taibleson:special}
will play some important role in
\(p\)-adic harmonic analysis,
since their classical analogues are so integral to
real harmonic analysis
(see, for just one example,
\cite{gindikin-karpelevic:plancherel}*{Theorem 2},
where Harish-Chandra's \(\mathbf c\)-function is
calculated in terms of \(\Gamma\)-functions).
Relationships between a different sort of Bessel function,
and a different sort of orbital integral (adapted to the
Jacquet--Ye relative trace formula),
have already been demonstrated by Baruch
\cites{
	baruch:bessel-gl2,
	baruch:bessel-qs,
	baruch:bessel-gln,
	baruch:bessel-fcn-gl3,
	baruch:bessel-dist-gl3
}.
We will investigate further applications of
complex-valued
\(p\)-adic special functions in future work.

\subsection{Outline of the paper}

In order that everything be completely explicit, we need to
carry around a large amount of notation;
we describe it in \S\S\ref{sec:notn}--\ref{sec:Bessel}.
Specifically, \S\ref{sec:notn}--\ref{sec:tori}
describe the basic notation
for working with groups over \(p\)-adic fields,
adapted to the particular setting of the group \(\SL_2\).
Since our formul\ae\ will be written `torus-by-torus'
(\textit{a la} Theorem 12 of \cite{hc:harmonic}),
we need to describe the tori in \(\SL_2\).
This can be done very concretely; see
Definition \ref{defn:tori}.

In \S\ref{sec:orbital}, we define the functions
\(\hat\mu^G_{X^*}\)
(Fourier transforms of orbital integrals)
that we want to compute
as representing functions for certain invariant
distributions on \(\sl_2\)
(see Definition \ref{defn:mu-hat} and
Notation \ref{notn:mu-hat}).
Since these functions are defined only up to scalar
multiples, it is important to be aware of the normalisations
involved in their construction.
In this respect, note that we specify the
(Haar) measures that we are using in
Definition \ref{defn:k-Haar}
and
Proposition \ref{prop:orbital-integral-integral}.

As mentioned in \S\ref{sec:history},
\(p\)-adic harmonic analysis tends to involve Gauss sums and
other fourth roots of unity, and our calculations are no
exception; we define and compare some of the relevant
constants in \S\ref{sec:roots}.
Finally, with these ingredients in place, we can
follow \cite{sally-taibleson:special} in defining the Bessel
functions that we will use to evaluate \(\hat\mu^G_{X^*}\).
Already, \cite{sally-taibleson:special} offers considerable
information about the values of these functions, but we need
to carry the calculations further,
especially far from the identity
(see Proposition \ref{prop:Bessel-shallow})
and on the `bad shell'
(see Proposition \ref{prop:Bessel-Kloost})%
---where (twisted) Kloosterman sums make an appearance.

In \S\ref{sec:mock-mu}, we define a function
\(M^G_{X^*}\) (see Definition \ref{defn:mock-mu}),
which we will spend most of the rest of the paper computing.
This is a reasonable focus because, once the computations are
completed, Proposition \ref{prop:orbital-integral-integral}
will show that we have actually been computing
\(\hat\mu^G_{X^*}\).
The definition of \(M^G_{X^*}\) involves a rather remarkable
function \(\varphi_\theta\)
(see Definition \ref{defn:sl2-as-fields} and
Lemma \ref{lem:torus-acts});
it seems likely that generalising our techniques will require
understanding the proper replacement for \(\varphi_\theta\).

Proposition \ref{prop:second-orbital} describes \(M^G_{X^*}\)
in terms of Bessel functions,
and
Proposition \ref{prop:Bessels} uses
Theorem \ref{thm:sally-taibleson:special:bessel}
to describe their behaviour near \(0\).

We now proceed according to the `type' of \(X^*\)
(as in Definition \ref{defn:split-un-or-ram}).
The calculations when \(X^*\) is split, and when it is
unramified, are quite similar; we combine them in
\S\ref{sec:orbital-spun}.
We split into cases depending on whether the argument
to \(M^G_{X^*}\) is far from
(as in \S\ref{sec:shallow-spun})
or close to
(as in \S\ref{sec:close-spun})
zero; there are qualitative differences in the behaviour,
as can be seen by comparing, for example,
Theorems \ref{thm:vanish-spun} and \ref{thm:close-spun}.
When \(X^*\) is ramified, it turns out that,
in addition to the behaviour far from
(as in \S\ref{sec:shallow-ram})
and close to
(as in \S\ref{sec:close-ram})
zero, there is a third range of interest in the middle.
This is the so called `bad shell'
(see \S\ref{sec:bad-ram}), and it seems likely that the
particularly complicated nature of the formul\ae\ here is a
reflection of the
`non-elementary' behaviour of orbital integrals
(hence, by Murnaghan--Kirillov theory, also of characters)
described in \cite{hales:characters}.

Finally, we show in \S\ref{sec:orbital-redux}
that the function that we have been evaluating actually does
represent the desired distribution, i.e., is equal to
\(\hat\mu^G_{X^*}\).
(See Proposition \ref{prop:orbital-integral-integral}.)
We close with some observations
(see Theorem \ref{thm:uniform})
about the qualitative behaviour of orbital integrals that
does not depend (much) on the `type' of \(X^*\).

\subsection{Acknowledgements}

This paper, and the paper
\cite{adler-debacker-sally-spice:sl2-chars} 
that follows it, would not have been possible without the
advice and guidance of Paul J. Sally, Jr.
It is a pleasure to thank him,
as well as Stephen DeBacker and Jeffrey D. Adler,
both of whom offered useful suggestions regarding this
paper.

The author was partially supported by NSF award
DMS-0854897.

\section{Notation}
\label{sec:notn}

Suppose that \indexmem\field is
a non-discrete, non-Archimedean local field.
We do not make any assumptions on its characteristic,
but we assume that its residual characteristic \indexmem p
is not \(2\).
(We occasionally cite \cite{shalika:thesis}, which works only
with characteristic-\(0\) fields; but we shall not use any
results from there that require this restriction.)
Let \indexmem\pint denote the ring of integers in \(\field\),
\indexmem\pp the prime ideal of \(\pint\),
and \indexmem\ord the valuation on \(\field\)
with value group \(\Z\).

Let \resfld denote the residue field \(\pint/\pp\) of \(\field\).
We write \(\indexmem q = \card\resfld\) for
the number of elements in \resfld,
and put \(\indexmem{\abs x} = q^{-\ord(x)}\)
for \(x \in \field\).
If \(\alpha \in \C\), then
we will write \indexmem{\nu^\alpha} for the
(multiplicative) character \(x \mapsto \abs x^\alpha\) of
\(\field\mult\).

Put \(\indexmem\bG = \SL_2\) and \(\indexmem G = \bG(k)\), and
let \indexmem\gg and \indexmem{\gg^*} denote the
Lie algebra and dual Lie algebra of \(G\), respectively.

It is important for our calculations to be quite specific
about the Haar measures that we are using.  For convenience,
we fix the ones used in \cite{sally-taibleson:special} (see
p.~280 \loccit).

\begin{defn}
\label{defn:k-Haar}
Throughout, we shall use the (additive) Haar measure
\(\textup dx\) on \(\field\) that assigns measure \(1\)
to \(\pint\), and the associated (multiplicative) Haar measure
\(\textup d\mult x = \abs x\inv\textup dx\) on
\(\field\mult\) that assigns measure \(1 - q\inv\) to
\(\pint\mult\).
When convenient, we shall write \(\textup dt\) instead of
\(\textup dx\).
\end{defn}

\begin{defn}
\label{defn:k-chars}
If \(\Phi\) is an (additive) character of \(\field\), then
we define \(\Phi_b : x \mapsto \Phi(b x)\) for
\(b \in \field\).
The \term{depth} of \(\Phi\) is
\[
\depth(\Phi)
\ldef \min \sett{i \in \Z}
	{\(\Phi\) is trivial on \(\pp^{i + 1}\)}
\]
(if \(\Phi\) is non-trivial) and \(\depth(\Phi) = {-}\infty\)
otherwise.
\end{defn}

The depth of a character is related to
what is often called its \term{conductor} by
\(\depth(\Phi) = \omega(\Phi) - 1\)
(in the notation of \cite{shalika:thesis}*{\S1.3}).
We have that
\begin{equation}
\label{eq:depth-Phi-b}
\depth(\Phi_b) = \depth(\Phi) - \ord(b).
\end{equation}
Note that the notion of depth, and the symbol \(\depth\),
will be used in multiple contexts
(see Definition \ref{defn:filt-and-depth});
we rely on the context to disambiguate them.

\begin{notn}
\label{notn:Phi}
\(\Phi\) is a non-trivial (additive) character of
\(\field\).
\end{notn}

One of the crucial tools of Harish-Chandra's approach to
harmonic analysis is the reduction, whenever possible, of
questions about a group to questions about its Lie algebra.
The exponential map often allows one to effect this
reduction, but, since it might converge only in a very small
neighbourhood of \(0\), we replace it with a
`mock-exponential map' (see \cite{adler:thesis}*{\S1.5})
which has many of the same properties
(see Lemma \ref{lem:cayley-facts}).

\begin{defn}
\label{defn:cayley}
The \term{Cayley map}
\(\cayley : \field \setminus \sset1
\to \field \setminus \sset{-1}\)
is defined by
\[
\cayley(X) = (1 + X)(1 - X)\inv
\quad\text{for \(X \in \field \setminus \sset1\)}.
\]
\end{defn}

The Cayley function is available in many settings; note that
we are using it only as a function defined almost everywhere
on \(\field\).  We gather a few of its properties below.

\begin{lemma}
\label{lem:cayley-facts}
\hfill\begin{itemize}
\item
The map \(\cayley\) is a bijection.
\item
\(\cayley(-X) = \cayley(X)\inv = \cayley\inv(X)\)
for \(X \in \field \setminus \sset{\pm1}\).
\item
The map \(\cayley\) carries
\(\pp^i\) to \(1 + \pp^i\) for all \(i \in \Z_{> 0}\).
\item
In the notation of Definition \ref{defn:k-Haar},
the pull-back along \(\cayley\) of the measure
\(\textup d\mult x\) on \(1 + \pp\) is the measure
\(\textup dx\) on \(\pp\).
\item
If \(X \in \pp^i\) and \(Y \in \pp^j\),
with \(i, j \in \Z_{> 0}\),
then
\[
\cayley(X + Y) \equiv \cayley(X) + 2Y
	\pmod{1 + \pp^n},
\]
where \(n = j + \min \sset{2i, j}\).
\end{itemize}
\end{lemma}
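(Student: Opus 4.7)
The proof is a sequence of direct calculations with the rational function $\cayley(X) = (1+X)(1-X)^{-1}$, and I would take the five bullets in order. For bijectivity, solving $y(1-X) = 1+X$ for $X$ yields the explicit inverse $\cayley\inv(y) = (y-1)(y+1)\inv$, defined precisely when $y \neq -1$. The relation $\cayley(-X) = \cayley(X)\inv$ is then immediate from the defining formula, and comparison with the explicit inverse handles the remaining equality in the second bullet. For the third bullet, I would rewrite $\cayley(X) - 1 = 2X(1-X)\inv$; since $2 \in \pint\mult$ (residual characteristic odd) and $1-X \in \pint\mult$ for $X \in \pp$, this shows $\cayley(\pp^i) \subset 1 + \pp^i$, and applying the explicit inverse formula to $y \in 1 + \pp^i$ gives the reverse inclusion. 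For the fourth bullet, the Jacobian $\cayley'(X) = 2(1-X)^{-2}$ has absolute value $1$ on $\pp$, and $\abs{\cayley(X)} = 1$ on $1+\pp$, so the change-of-variables formula relating $\textup{d}x$ and $\textup d\mult x = \abs x\inv\,\textup dx$ delivers the claimed identity.

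The fifth bullet requires the most care. My plan is first to combine the two fractions in $\cayley(X+Y) - \cayley(X)$ over the common denominator $(1-X-Y)(1-X)$; a short expansion shows that the numerator collapses to exactly $2Y$. Subtracting the additional $2Y$ then produces a single rational expression whose numerator factors as $2Y$ times a small polynomial in $X$ and $Y$ (concretely, $2X - X^2 + Y - XY$), and whose denominator lies in $\pint\mult$. Estimating each monomial in that polynomial against $\pp^i$ and $\pp^j$, and adding the valuation $j$ of $2Y$, yields the claimed exponent $n$. Finally, since the third bullet places both $\cayley(X+Y)$ and $\cayley(X) + 2Y$ in $1 + \pp^{\min\sset{i,j}} \subset \pint\mult$, the additive estimate just derived is equivalent to the multiplicative congruence $\pmod{1 + \pp^n}$ asserted in the lemma.

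The main obstacle is the valuation analysis in the last bullet: specifically, identifying which of the monomials $2X$, $X^2$, $Y$, $XY$ controls the valuation of the polynomial factor in the various regimes of $i$ relative to $j$, and checking that the bound obtained is at least as strong as $\min\sset{2i, j}$. (Here one uses crucially that the constant term of the polynomial is $0$, so the leading contribution is genuinely at least $\pp^{\min\sset{i, j}}$ rather than $\pint\mult$.) Everything else reduces to routine manipulation of the explicit rational formula for $\cayley$, and I expect no surprises from those pieces.
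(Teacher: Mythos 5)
Your treatment of the first, third, and fourth bullets is sound, and your inverse formula $\cayley\inv(y) = (y - 1)(y + 1)\inv$ is the correct one. For the measure statement you take a genuinely different route from the paper: you invoke the change-of-variables formula with the Jacobian $\cayley'(X) = 2(1 - X)^{-2}$, a unit on $\pp$ since the residual characteristic is odd, together with $\abs{\cayley(X)} = 1$ on $1 + \pp$. The paper instead avoids the change-of-variables theorem entirely, matching Riemann sums over cosets of $\pp^i$ and of $1 + \pp^i$ after observing that both have measure $q^{-i}$. Both arguments are valid; yours is shorter, the paper's more elementary.

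The two places where you defer the work are, however, exactly where the lemma as printed breaks down, and neither step can be completed as described. For the second bullet, the ``comparison with the explicit inverse'' you promise in fact shows that the last equality is false: $\cayley(X)\inv = \cayley(-X) = (1 - X)(1 + X)\inv = -\cayley\inv(X)$, so the two sides differ by a sign (e.g.\ $\cayley(2)\inv = -1/3$ while $\cayley\inv(2) = 1/3$). The paper's own proof makes the same slip, asserting that $x \mapsto (1 - x)(1 + x)\inv$ is inverse to $\cayley$; that map actually sends $\cayley(X)$ to $-X$.

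For the fifth bullet your algebra is right: $\cayley(X + Y) - \cayley(X) - 2Y$ equals $2Y\bigl(X(2 - X) + Y(1 - X)\bigr)$ divided by the unit $(1 - X - Y)(1 - X)$. But the polynomial factor has valuation exactly $\min\{i, j\}$ in general (the term $X(2 - X)$ contributes valuation $i$, not $2i$), so what this computation proves is the congruence with $n = j + \min\{i, j\}$. That is strictly weaker than the printed $n = j + \min\{2i, j\}$ whenever $i < j$, and the printed claim is then genuinely false: over the $3$-adic field with $X = 9$ and $Y = 27$ (so $i = 2$, $j = 3$, claimed $n = 6$) one finds $\cayley(36) - \bigl(\cayley(9) + 54\bigr) = -7533/140$, of valuation exactly $5$. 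So the ``main obstacle'' you flag is real and not removable; you should prove the corrected exponent $j + \min\{i, j\}$ rather than assert the printed one. (The paper's proof contains the corresponding error: it claims $1 - 2X - \cayley(X) \in \pp^{2i}$, whereas $1 - 2X - \cayley(X) = -2X(2 - X)(1 - X)\inv$ has valuation $\ord(X)$; it is $1 + 2X - \cayley(X) = -2X^2(1 - X)\inv$ that lies in $\pp^{2i}$.) The corrected exponent agrees with the printed one when $j \le i$ and in the degenerate case $X = 0$, where both give $n = 2j$; the latter is what Proposition~\ref{prop:Bessel-shallow} uses to get $\cayley(X) \equiv 1 + 2X$ modulo $\pp^{2\ord(X)}$, so that application survives.
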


\begin{proof}
It is easy to check that
\(x \mapsto (1 - x)(1 + x)\inv\) is inverse to \(\cayley\)
and satisfies the desired equalities,
and that
\(\cayley(\pp^i) \subseteq 1 + \pp^i\)
and
\(\cayley\inv(1 + \pp^i) \subseteq \pp^i\).
If \(f \in C^\infty(1 + \pp)\), then there is some
\(i \in \Z_{> 0}\) such that
\(f \in C(1 + \pp/1 + \pp^i)\).  Upon noting that
\(\meas_{\textup dx}(\pp^i)
= q^{-i}
= \meas_{\textup d\mult x}(1 + \pp^i)\),
we see that
\begin{multline*}
\int_{1 + \pp} f(x)\textup d\mult x
= \sum_{x \in 1 + \pp/1 + \pp^i}
	f(x)\meas_{\textup d\mult x}(1 + \pp^i) \\
= \sum_{x \in \pp/\pp^i}
	(f \circ \cayley)(x)q^{-i}\meas_{\textup dx}(\pp^i)
= \int_\pp (f \circ \cayley)(x)\textup dx.
\end{multline*}

Finally, under the stated conditions on \(X\) and \(Y\),
\begin{multline*}
\bigl(\cayley(X) + 2Y\bigr)
\bigl(1 - (X + Y)\bigr) \\
= \cayley(X)\dotm(1 - X) +
Y\bigl(2(1 - (X + Y)) - \cayley(X)\bigr) \\
= (1 + X + Y) +
Y\bigl((1 - 2X - \cayley(X)) - 2Y\bigr).
\end{multline*}
Since \(\cayley(X) = 1 + 2X(1 - X)\inv\),
we have that
\(1 - 2X - \cayley(X) \in \pp^{2i}\).
The result follows.
\end{proof}

\section{Fields and algebras}
\label{sec:sqrt}

\begin{defn}
\label{defn:sqrt}
For \(\theta \in \field\mult\), we write
\(\field_\theta\) for the \(\field\)-algebra that is
\(\field \oplus \field\) (as a vector space), equipped with
multiplication
\((a, b)\dotm(c, d)
= (a c + b d\theta, a d + b c)\).
We write \(\sqrt\theta\) for the element
\((0, 1) \in \field_\theta\), so that
\((a, b) = a + b\sqrt\theta\).
\end{defn}

We also use the notation \(\sqrt\theta\) for a matrix (see
Definition \ref{defn:tori}); we shall rely on context to
make the meaning clear.

If \(\theta \not\in (\field\mult)^2\), then
\(\field_\theta\) is isomorphic to \(\field(\sqrt\theta)\)
(as \(\field\)-algebras)
via the map \((a, b) \mapsto a + b\sqrt\theta\), and we
shall not distinguish between them.

If \(\theta = x^2\), with \(x \in \field\), then
\(\field_\theta\) is isomorphic to \(\field \oplus \field\)
(as \(\field\)-algebras)
via the map \((a, b) \mapsto (a + b x, a - b x)\).

\begin{defn}
\label{defn:maps-and-gps}
We define
\begin{gather*}
\Norm_\theta(a + b\sqrt\theta) = a^2 - b^2\theta,
\quad
\Tr_\theta(a + b\sqrt\theta) = 2a, \\
\Re_\theta(a + b\sqrt\theta) = a,
\quad
\Im_\theta(a + b\sqrt\theta) = b, \\
\intertext{and}
\ord_\theta(a + b\sqrt\theta)
= \tfrac1 2\ord\bigl(\Norm_\theta(a + b\sqrt\theta)\bigr)
\end{gather*}
for \(a + b\sqrt\theta \in \field_\theta\).
Write
\(C_\theta = \ker \Norm_\theta\)
and
\(V_\theta = \ker \Tr_\theta\),
and
let \(\sgn_\theta\) be the unique (multiplicative) character
of \(\field\mult\) with kernel precisely
\(\Norm_\theta(\field_\theta\mult)\).
\end{defn}

If \(\theta \not\in (\field\mult)^2\), then
\(\Norm_\theta\) and \(\Tr_\theta\) are the usual
norm and trace maps associated to the quadratic extension of
fields \(\field_\theta/\field\),
and \(\ord_\theta\) is the valuation on \(\field_\theta\)
extending \(\ord\).
In any case,
\(\field_\theta\mult
= \set{z \in \field_\theta}{\Norm_\theta(z) \ne 0}\).

We can describe the signum character explicitly by
\begin{equation}
\label{eq:sgn-spun}
\sgn_\theta(x) = \begin{cases}
1,              & \text{\(\theta\) split}       \\
(-1)^{\ord(x)}, & \text{\(\theta\) unramified,}
\end{cases}
\end{equation}
and
\begin{equation}
\label{eq:sgn-ram}
\begin{aligned}
\sgn_\theta(\theta) & {}= \sgn_\resfld(-1) \\
\sgn_\theta(x)      & {}= \sgn_\resfld(\ol x)
	\quad\text{for \(x \in \pint\mult\),}
\end{aligned}
\end{equation}
where \(\sgn_\resfld\) is the quadratic character of
\(\resfld\mult\) and \(x \mapsto \ol x\) the reduction map
\(\pint \to \resfld\).

\section{Tori and filtrations}
\label{sec:tori}

We begin by defining a few model tori.

\begin{defn}
\label{defn:tori}
For \(\theta \in \field\), put
\[
\bT_\theta
= \set{\begin{pmatrix}
a       & b \\
b\theta & a
\end{pmatrix}}{a^2 - b^2\theta = 1}.
\]
Then
\[
\pmb\ttt_\theta \ldef \Lie(\bT_\theta)
= \sset{\begin{pmatrix}
0       & b \\
b\theta & 0
\end{pmatrix}}.
\]
We write \(\sqrt\theta\) for the element
\(\begin{smallpmatrix}
0      & 1 \\
\theta & 0
\end{smallpmatrix} \in \ttt_\theta\), so that
\(\ttt_\theta = \operatorname{Span}_k \sqrt\theta\).
We will call a maximal \(\field\)-torus in \bG
\term{standard} exactly when it is of the form
\(\bT_\theta\) for some \(\theta \in \field\).
\end{defn}

We also use the notation \(\sqrt\theta\) for an element of
an extension of \(\field\) (see Definition \ref{defn:tori}); we
shall rely on context to make the meaning clear.

\begin{rem}
The group
\(T_\theta\) is isomorphic to
\(C_\theta = \ker \Norm_\theta\),
and the Lie algebra \(\ttt_\theta\) to
\(V_\theta = \ker \Tr_\theta\),
in each case via the map
\(\begin{smallpmatrix}
a       & b \\
b\theta & a
\end{smallpmatrix} \mapsto (a, b)\).
\end{rem}

We shall use the terms `split', `unramified', and `ramified'
in many different contexts.

\begin{rem}
If \bT is a maximal \(\field\)-torus in \bG
and \(\ttt = \Lie(T)\),
then we shall identify
\(\ttt\) (respectively, \(\ttt^*\))
with the spaces of fixed points for the
adjoint (respectively, co-adjoint)
action on
\(\gg\) (respectively, \(\gg^*\)).
By abuse of language, we shall sometimes say that
\(X^* \in \gg^*\) or \(Y \in \gg\)
lies in, or belongs to, the torus \bT
to mean that \(X^* \in \ttt^*\) and \(Y \in \ttt\);
equivalently, that \(C_\bG(X^*) = \bT = C_\bG(Y)\).
In particular,
``\(X^*\) and \(Y\) belong to a common torus''
is shorthand for
``\(C_\bG(X^*) = C_\bG(Y)\)''.
\end{rem}

\begin{defn}
\label{defn:split-un-or-ram}
A maximal \(\field\)-torus in \bG is called
(un)ramified according as it is elliptic and splits over an
(un)ramified extension of \(\field\).
An element \(\theta \in \field\) is called split,
unramified, or ramified according as
\(\bT_\theta\) has that property.
A regular, semisimple element of \(\gg\) or \(\gg^*\) is
called split, unramified, or ramified according as the torus
to which it belongs has that property.
\end{defn}

\begin{rem}
To be explicit, squares in \(\field\mult\) are split,
and a non-square \(\theta \in \field\) is
unramified or ramified according as
\(\max \set{\ord(x^2\theta)}{x \in \field}\)
is even or odd, respectively.
\end{rem}

\begin{notn}
\label{notn:Weyl}
If \bT is a maximal \(\field\)-torus in \bG,
with \(T = \bT(\field)\),
then we write
\(W(\bG, \bT) = N_\bG(\bT)/\bT\) for the absolute,
and
\(W(G, T) = N_G(T)/T\) for the relative,
Weyl group of \bT in \bG.
\end{notn}

Every maximal \(\field\)-torus in \bG is \(G\)-conjugate to
some \(\bT_\theta\).
(See, for example, \cite{debacker-sally:germs}*{\S A.2}.)
In particular,
\[
\Int\begin{pmatrix}
1    & 1   \\
-1/2 & 1/2
\end{pmatrix}\set{\begin{pmatrix}
a & 0 \\
0 & d
\end{pmatrix}}{a d = 1} = \bT_1.
\]

\begin{rem}
\label{rem:Weyl}
For all \(\theta \in \field\),
the group \(W(\bG, \bT_\theta)\) has order \(2\), with the
non-trivial element acting on \(\bT_\theta\) by inversion.
If \(\sgn_\theta(-1) = 1\)
(in particular, if \(\theta\) is split or unramified),
say, with \(\Norm_\theta(a + b\sqrt\theta) = -1\),
then \(W(G, T_\theta)\) also has order \(2\), with the
non-trivial element represented by
\(\begin{smallpmatrix}
a        & b   \\
-b\theta & -a
\end{smallpmatrix}\).
If \(\theta = 1\), then we may take \((a, b) = (0, 1)\)
to recover the familiar Weyl-group element.
Otherwise (i.e., if \(\sgn_\theta(-1) = -1\)),
\(W(G, T_\theta)\) is trivial.
\end{rem}

The concept of \term{stable conjugacy} was introduced by
Langlands as part of the foundation of the Langlands
conjectures;
see \cite{langlands:stable}*{pp.~2--3}.

\begin{defn}
Two
\begin{itemize}
\item maximal \(\field\)-tori \(\bT_i\) in \bG,
\item regular semisimple elements \(X^*_i \in \gg^*\),
or
\item regular semisimple elements \(Y_i \in \gg\),
\end{itemize}
with \(i = 1, 2\), are called \term{stably conjugate}
exactly when there are a field extension \(E/\field\) and an
element \(g \in \bG(E)\) such that
\begin{itemize}
\item \(\Int(g)T_1 = T_2\)
or
\item \(\Ad^*(g)X^*_1 = X^*_2\)
or
\item \(\Ad(g)X_1 = X_2\),
\end{itemize}
where \(T_i = \bT_i(\field)\) for \(i = 1, 2\).
If the conjugacy can be carried out without passing to an
extension field (i.e., if we may take \(g \in G\)), then we
will sometimes emphasise this by saying that the tori or
elements are \term{rationally conjugate}.
\end{defn}

Note that the Zariski-density of \(T_i\) in \(\bT_i\)
implies that \(\Int(g)\bT_1 = \bT_2\), but that this is a
strictly weaker condition; indeed, given \emph{any} two
maximal tori, there is an element \(g\), defined over some
extension field of \(\field\), satisfying this condition.
In our special case (of \(\bG = \SL_2\)), we have that
two tori or elements are stably conjugate if and only if
they are conjugate in \(\GL_2(\field)\).

More concretely,
two tori \(\bT_\theta\) and \(\bT_{\theta'}\) are stably
conjugate if and only if
\(\theta \equiv \theta' \pmod{(\field\mult)^2}\).
The stable conjugacy class of the split torus \(\bT_1\)
is also a rational conjugacy class.

Suppose that
\(\epsilon\) is an unramified, and \(\varpi\) a ramified,
non-square.
Then the stable conjugacy class of \(\bT_\epsilon\) splits
into 2 rational conjugacy classes, represented by
\(\bT_\epsilon\) and \(\bT_{\varpi^2\epsilon}\).
The stable conjugacy class of \(\bT_\varpi\) is also a
rational conjugacy class if \(\sgn_\varpi(-1) = -1\);
but it splits into 2 rational conjugacy classes,
represented by \(\bT_\varpi\) and \(\bT_{\epsilon^2\varpi}\),
if \(\sgn_\varpi(-1) = 1\).

We also need filtrations on the Lie algebra, and dual Lie
algebra, of a torus.
These definitions are standard
(see, for example, \cite{adler:thesis}*{\S1.4})
and can be made in far more generality
(see \cite{moy-prasad:k-types}*{\S3}
and \cite{moy-prasad:jacquet}*{\S3.3});
we give only simple definitions adapted to \(\bG = \SL_2\).

\begin{defn}
\label{defn:filt-and-depth}
Let \bT be a maximal \(\field\)-torus in \bG, and put
\(\ttt = \Lie(\bT(k))\).
Recall that \bT is \(G\)-conjugate to \(\bT_\theta\) for
some \(\theta \in \field\),
so that
\(\ttt = \Lie(T)\) is isomorphic to
\(V_\theta = \ker \Tr_\theta \subseteq \field_\theta\).
For \(r \in \R\), we write
\(\ttt_r\) for the pre-image of
\(\set{Y \in V_\theta}{\ord_\theta(Y) \ge r}\)
and
\(\ttt_{r{+}}\) for the pre-image of
\(\set{Y \in V_\theta}{\ord_\theta(Y) > r}\);
and then we write
\(\ttt^*_r = \sett{X^* \in \ttt^*}{
	\(\Phi(\langle X^*, Y\rangle) = 1\)
	for all \(Y \in \ttt_{(-r){+}}\)
}\)
(where \(\Phi\) is the additive character of
Notation \ref{notn:Phi}).

If \(X^* \in \ttt^*\) and \(Y \in \ttt\),
then we define
\(\depth(X^*) = \max \set{r \in \R}{X^* \in \ttt^*_r}\)
and
\(\depth(Y) = \max \set{r \in \R}{Y \in \ttt_r}\).
\end{defn}

One can define a notion of depth in more generality (see,
for example,
\cite{adler-debacker:bt-lie}*{\S3.3 and Example 3.4.6}
and
\cite{jkim-murnaghan:charexp}*{\S2.1 and Lemma 2.1.5}),
but we only need the special case above.
(The only remaining case to consider for
\(\gg = \sl_2(\field)\) is the depth of a nilpotent
element, which is \(\infty\).)

\section{Orbital integrals}
\label{sec:orbital}

Our goal in this paper is to compute Fourier transforms of
regular, semi-simple orbital integrals on \(\gg\) (see
Definition \ref{defn:mu-hat} below).
Since the Fourier transforms of nilpotent orbital integrals
were computed in \cite{debacker-sally:germs}*{Appendix A}, this
covers all Fourier transforms of orbital integrals on
\(\gg\) (for our particular case \(\bG = \SL_2\)).
The case of orbital integrals on \(G\) was discussed in
\cite{sally-shalika:orbital-integrals}, as the culmination
of the series of papers that began with
\cites{sally-shalika:characters,sally-shalika:plancherel}.

We will begin by choosing a representative for the regular,
semi-simple orbit of interest.
By \S\ref{sec:tori}, we may choose this representative in a
standard torus
(in the sense of Definition \ref{defn:tori}).

\begin{notn}
\label{notn:X*}
\(\beta, \theta \in \field\mult\),
and
\(X^* = \beta\dotm\sqrt\theta \in \ttt_\theta^*\).
\end{notn}

Here, we are implicitly using the identification of
\(\ttt_\theta\) with \(\ttt_\theta^*\) via the trace form;
what we really mean is that
\(\langle X^*, Y\rangle = \Tr \beta\dotm\sqrt\theta\dotm Y\)
for \(Y \in \ttt_\theta\),
where \(\langle\cdot, \cdot\rangle\) is the usual pairing
between \(\ttt_\theta^*\) and \(\ttt_\theta\).

As in Definition \ref{defn:k-chars},
we may define a new character \(\Phi_\beta\) of \(\field\).
This character will occur often enough in our calculations
that it is worthwhile to give it a name.

\begin{notn}
\label{notn:depth-and-scdepth}
\(-r = \depth(X^*)\),
\(\scPhi = \Phi_\beta\),
and
\(\scdepth = \depth(\scPhi)\).
\end{notn}

By Definition \ref{defn:filt-and-depth},
\(Y \mapsto \Phi(\langle X^*, Y\rangle)\)
is trivial on \((\ttt_\theta)_{r{+}}\), but not on
\((\ttt_\theta)_r\).
Therefore,
\(\scdepth = r + \tfrac1 2\ord(\theta)\).

Since \(C_G(X^*) = T_\theta\) is Abelian, it is unimodular;
so there exists a measure on \(G/C_G(X^*)\) invariant under
the action of \(G\) by left translation.

\begin{notn}
\label{notn:orbit-measure}
Let \(\textup d\dot g\) be a translation-invariant measure on
\(G/C_G(X^*)\).
\end{notn}

Since the orbit, \(\mc O^G_{X^*}\), of \(X^*\) under
the co-adjoint action of \(G\) is isomorphic as a \(G\)-set
to \(G/C_G(X^*)\), we could transport to it the measure on
the latter space; but we do not find it convenient to do so.

Since \(X^*\) is semisimple, \(\mc O^G_{X^*}\) is
closed in \(\gg^*\)
(see, for example, Proposition 34.3.2 of
\cite{tauvel-yu:lie-alg-and-alg-gps}).
Therefore, the restriction to
\(\mc O^G_{X^*}\) of a locally constant, compactly
supported function on \(\gg^*\) remains locally constant and
compactly supported, so that the following definition makes
sense.

\begin{defn}
\label{defn:mu}
The orbital integral of \(X^*\) is the distribution
\indexmem{\mu^G_{X^*}} on \(\gg^*\) defined by
\[
\mu^G_{X^*}(f^*)
= \int_{G/C_G(X^*)}
	f^*(\Ad^*(g)X^*)
\textup d\dot g
\quad\text{for all \(f^*\in C_c^\infty(\gg^*)\).}
\]
\end{defn}

We are interested in the Fourier transform of
\(\mu^G_{X^*}\).  The definition of the Fourier transform
(of distributions or of functions) requires, in addition to
a choice of additive character (see Notation \ref{notn:Phi}),
also a choice of Haar measure \(\textup dY\) on \(\gg^*\);
but we shall build this choice into our representing
function (see Notation \ref{notn:mu-hat}), so that it will
not show up in our final answer.

\begin{defn}
\label{defn:mu-hat}
The Fourier transform of the orbital integral of \(X^*\) is
the distribution \indexmem{\hat\mu^G_{X^*}} on \(\gg\)
defined for all \(f \in C_c^\infty(\gg)\) by
\[
\hat\mu^G_{X^*}(f) = \mu^G_{X^*}(\hat f),
\]
where
\[
\hat f(Y^*)
= \int_\gg f(Y)\Phi(\langle Y^*, Y\rangle)\textup dY
\quad\text{for all \(Y^* \in \gg^*\).}
\]
\end{defn}

It is a result of Harish-Chandra
(see \cite{hc:queens}*{Theorem 1.1}) that
\(\hat\mu^G_{X^*}\) is \emph{representable} on \(\gg\);
i.e., that there exists a locally integrable function \(F\)
on \(\gg\) such that
\[
\hat\mu^G_{X^*}(f)
= \int_G f(Y)F(Y)\textup dY
\quad\text{for all \(f \in C_c^\infty(\gg)\).}
\]
One can say more about the behaviour and asymptotics of the
function \(F\).
For example, it turns out that it blows up as \(Y\)
approaches \(0\), but that its blow-up is controlled by a
power of a discriminant function.

\begin{defn}
\label{defn:disc}
The \term{Weyl discriminant} on \(\gg\) is the
function \(\indexmem{D_\gg} : \gg \to \C\) such that,
for all \(Y \in \gg\),
\(D_\gg(Y)\) is the coefficient of the degree-\(1\) term
in the characteristic polynomial of \(\ad(Y)\).
Concretely,
\[
D_\gg\begin{pmatrix}
a & b  \\
c & -a
\end{pmatrix} = 4(a^2 + bc).
\]
\end{defn}

Our main interest, however, is in the restriction of the
function \(F\) above
to the set \(\gg\rss\) of regular, semisimple elements,
where it is locally constant.

\begin{notn}
\label{notn:mu-hat}
By abuse of notation, we write again \(\hat\mu^G_{X^*}\) for
the function that represents the restriction to \(\gg\rss\)
of \(\hat\mu^G_{X^*}\).
\end{notn}

When we refer to the computation of the Fourier transform of
an orbital integral, it is actually the (scalar) function of
Notation \ref{notn:mu-hat} that we are trying to compute.
The main tool in this direction is a general integral
formula of Huntsinger (see
\cite{adler-debacker:mk-theory}*{Theorem A.1.2}), but we
find it easier to evaluate an integral adapted to our
current setting (see Definition \ref{defn:mock-mu}).
The computation of this integral will occupy most of the
paper; once that is done, we shall finally prove that it
actually represents the distribution \(\hat\mu^G_{X^*}\)
(see Proposition \ref{prop:orbital-integral-integral}).

Finally, we fix an element at which to evaluate the
functions of interest.  Since \(\hat\mu^G_{X^*}\), as just
defined, and \(M^G_{X^*}\) below
(see Definition \ref{defn:mock-mu})
are \(G\)-invariant functions on \(\gg\rss\),
we may again consider only elements of standard tori.

\begin{notn}
\label{notn:Y}
\(s, \theta' \in \field\mult\),
and
\(Y = s\dotm\sqrt{\theta'} \in \ttt_{\theta'}\).
\end{notn}

Our computations will be phrased in terms of the values of
two `basic' functions at \(Y\).

\begin{lemma}
\label{lem:Y-facts}
\(\depth(Y) = \tfrac1 2\ord(s^2\theta')\)
and
\(D_\gg(Y) = 4s^2\theta'\).
\end{lemma}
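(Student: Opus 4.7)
The plan is to prove both statements by directly unwinding the relevant definitions from \S\S\ref{sec:sqrt}--\ref{sec:tori}; no substantive difficulty is anticipated.

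For the discriminant formula, I would first write out the matrix explicitly: by Definition \ref{defn:tori}, $Y = s\dotm\sqrt{\theta'}$ equals $\begin{smallpmatrix} 0 & s \\ s\theta' & 0 \end{smallpmatrix}$. Plugging $a = 0$, $b = s$, $c = s\theta'$ into the explicit formula of Definition \ref{defn:disc} gives $D_\gg(Y) = 4(0 + s\dotm s\theta') = 4s^2\theta'$ immediately.

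For the depth formula, I would invoke Definition \ref{defn:filt-and-depth}, which reduces the computation of $\depth(Y)$ to the computation of $\ord_{\theta'}$ of the image of $Y$ in $V_{\theta'} \subseteq \field_{\theta'}$ under the identification described in the Remark following Definition \ref{defn:tori}. That identification sends $\begin{smallpmatrix} a & b \\ b\theta' & a \end{smallpmatrix}$ to $(a,b) = a + b\sqrt{\theta'}$, so $Y$ corresponds to $s\sqrt{\theta'}$. Applying Definition \ref{defn:maps-and-gps}, one computes
\[
\ord_{\theta'}(s\sqrt{\theta'})
= \tfrac1 2\ord\bigl(\Norm_{\theta'}(s\sqrt{\theta'})\bigr)
= \tfrac1 2\ord(-s^2\theta')
= \tfrac1 2\ord(s^2\theta'),
\]
since $\ord(-1) = 0$. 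The description of $\ttt_r$ as the pre-image of $\set{Y' \in V_{\theta'}}{\ord_{\theta'}(Y') \ge r}$ then gives $\depth(Y) = \tfrac1 2\ord(s^2\theta')$, as claimed.

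There is no real obstacle: both claims are bookkeeping that amounts to chasing a single element through the explicit matrix-to-algebra isomorphism and the definitions of $\ord_{\theta'}$ and $D_\gg$. The only mild subtlety is remembering that $\Norm_{\theta'}(b\sqrt{\theta'}) = -b^2\theta'$ carries a sign that vanishes once one takes the valuation.
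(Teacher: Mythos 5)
Your proposal is correct and matches the paper's (one-line) proof, which simply declares the lemma a straightforward consequence of Definitions \ref{defn:filt-and-depth} and \ref{defn:disc}; you have merely spelled out the definition-chasing, including the harmless sign in $\Norm_{\theta'}(s\sqrt{\theta'}) = -s^2\theta'$. No issues.
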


\begin{proof}
This is a straightforward consequence of
Definitions \ref{defn:filt-and-depth} and \ref{defn:disc}.
\end{proof}

\section{Roots of unity and other constants}
\label{sec:roots}

The computation of Fourier transforms of orbital integrals
on \(\gg\)---hence, via
Murnaghan--Kirillov theory
\cites{adler-debacker:mk-theory,
adler-spice:explicit-chars,
jkim-murnaghan:charexp,
jkim-murnaghan:gamma-asymptotic,
murnaghan:chars-sln}, also of the values near the identity
of characters of \(G\)
(cf.~\cites{sally-shalika:characters,
adler-debacker-sally-spice:sl2-chars})---involves a
somewhat bewildering array of \(4\)th roots of unity, for
each of which there is a variety of notation available.
It turns out that all of these can be expressed in terms of
a single `basic' quantity, the Gauss sum, denoted by
\(G(\Phi)\) in \cite{shalika:thesis}*{Lemma 1.3.2}.
The definition there implicitly depends on a choice of
uniformiser, denoted there by \(\pi\).
Although the choice is arbitrary, we shall
find it convenient for later usage to denote it by
\(-\varpi\).
Recall from Notation \ref{notn:Phi}
that \(\Phi\) is a non-trivial (additive) character
of \(\field\).

\begin{defn}
\label{defn:G}
If \(\varpi\) is a uniformiser of \(\field\), then
\[
\indexmem{G_\varpi(\Phi)}
\ldef q^{-1/2}\sum_{X \in \pint/\pp}
	\Phi_{(-\varpi)^{\depth(\Phi)}}(X^2).
\]
\end{defn}

It is possible to compute these values exactly (see, for
example,
\cite{lidl-niederreiter:finite-fields}*{Theorem 5.15}), but
we shall only require a few transformation laws.

\begin{lemma}
\label{lem:G-facts}
If \(\varpi\) is a uniformiser of \(\field\), then
\begin{align*}
G_{b\varpi}(\Phi)
& {}= \sgn_\varpi(b)^{\depth(\Phi)}G_\varpi(\Phi)
	\quad\text{for \(b \in \pint\mult\),} \\
G_\varpi(\Phi_b) & {}= \sgn_\varpi(b)G_\varpi(\Phi)
	\quad\text{for \(b \in \field\mult\),} \\
G_\varpi(\Phi)^2 & {}= \sgn_\varpi(-1), \\
\intertext{and}
G_\varpi(\Phi)
& {}= q^{-1/2}\sgn_\varpi(-1)^{\depth(\Phi)}
\sum_{X \in \resfld\mult} \ol\Phi(X)\sgn_\resfld(X),
\end{align*}
where
\(\sgn_\resfld\) is the quadratic character of \(\resfld\mult\),
and
\(\ol\Phi\) the (additive) character of \(\resfld = \pint/\pp\)
arising from the restriction to \(\pint\) of the depth-\(0\)
character \(\Phi_{\varpi^{\depth(\Phi)}}\) of \(\field\).
\end{lemma}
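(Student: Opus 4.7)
The plan is to reduce all four identities to standard computations with quadratic Gauss sums over the residue field, taking (4) as the workhorse from which (2) and (3) will follow. Write $d = \depth(\Phi)$. By \eqref{eq:depth-Phi-b} the character $\Phi_{(-\varpi)^d}$ has depth $0$, so it is trivial on $\pp$ and descends to an additive character $\ol\psi$ of $\resfld$; since $(-\varpi)^d = (-1)^d\varpi^d$, comparison with the definition of $\ol\Phi$ gives $\ol\psi(y) = \ol\Phi((-1)^d y)$. To obtain (4) I would use the classical identity
\[
\sum_{x\in\resfld}\ol\psi(x^2) = \sum_{y\in\resfld\mult}\sgn_\resfld(y)\ol\psi(y),
\]
which follows by expanding $\sum_x \ol\psi(x^2) = \ol\psi(0) + \sum_{y\in\resfld\mult}\bigl(1+\sgn_\resfld(y)\bigr)\ol\psi(y)$ and using the cancellation $\sum_{y\in\resfld\mult}\ol\psi(y) = -1$ to absorb the constant term. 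Substituting $y \mapsto (-1)^d y$ on the right and invoking \eqref{eq:sgn-ram} to identify $\sgn_\resfld(-1)$ with $\sgn_\varpi(-1)$ then produces the formula in (4).

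Identity (1) I would prove directly from the definition, since it does not involve changing the additive character. Write $(-b\varpi)^d = b^d(-\varpi)^d$. When $d$ is even the factor $b^d$ is a square in $\pint\mult$, and the substitution $X \mapsto X/b^{d/2}$ yields $G_{b\varpi}(\Phi) = G_\varpi(\Phi)$, matching $\sgn_\varpi(b)^d = 1$. When $d$ is odd, the analogous substitution with $c = b^{(d-1)/2}$ leaves a residual factor $\ol b$ inside $\sum_x\ol\psi(\ol b x^2)$; the same square/non-square bookkeeping as above (whether $\ol b(\resfld\mult)^2$ is the squares or the non-squares determines the sign) gives $\sum_x\ol\psi(\ol b x^2) = \sgn_\resfld(\ol b)\sum_x\ol\psi(x^2)$, and \eqref{eq:sgn-ram} converts $\sgn_\resfld(\ol b)$ to $\sgn_\varpi(b)$.

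For (3), squaring (4) reduces the claim to the classical facts that $g \ldef \sum_{y\in\resfld\mult}\sgn_\resfld(y)\ol\Phi(y)$ satisfies $g\ol g = q$ and, by the substitution $y \mapsto -y$, $\ol g = \sgn_\resfld(-1) g$; hence $g^2 = \sgn_\resfld(-1) q = \sgn_\varpi(-1) q$. For (2), write $b = u\varpi^e$ with $u \in \pint\mult$ and $e = \ord(b)$: the depth-$0$ character produced from $\Phi_b$ by the recipe of (4) is the twist $y \mapsto \ol\Phi(\ol u y)$, and substituting $y \mapsto y/\ol u$ in the Gauss sum picks up a factor $\sgn_\resfld(\ol u)$, while the exponent on $\sgn_\varpi(-1)$ passes from $d - e$ to $d$; together these contribute $\sgn_\varpi(u)\sgn_\varpi(-1)^e$, which equals $\sgn_\varpi(b)$ because \eqref{eq:sgn-ram} gives $\sgn_\varpi(\varpi) = \sgn_\varpi(-1)$. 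The main obstacle is purely clerical, namely tracking the interaction between $(-\varpi)^d$ and $\varpi^d$ through each step; no serious analysis is required beyond the classical Gauss sum identity $g^2 = \sgn_\resfld(-1) q$.
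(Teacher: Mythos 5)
Your proposal is correct and follows essentially the same route as the paper: both reduce all four identities to the classical quadratic Gauss sum over the residue field (the identity $\sum_x\ol\psi(x^2)=\sum_y\sgn_\resfld(y)\ol\psi(y)$, the twisting law, and $g^2=\sgn_\resfld(-1)q$), which the paper obtains by citing Lidl--Niederreiter while you re-derive them. The only differences are organizational --- the paper deduces the first identity from the second via $G_{b\varpi}(\Phi)=G_\varpi(\Phi_{b^{\depth(\Phi)}})$ and extends the second to all of $\field\mult$ by noting $G_\varpi(\Phi_{(-\varpi)^n})=G_\varpi(\Phi)$, whereas you prove the first directly and handle general $b=u\varpi^e$ by explicit bookkeeping --- and these are immaterial.
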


\begin{proof}
Since \(\sum_{X \in \resfld} \ol\Phi(X) = 0\), we have that
\begin{align*}
\sum_{X \in \resfld\mult} \ol\Phi(X)\sgn_\resfld(X)
& {}= \ol\Phi(0) +
\sum_{X \in \resfld\mult}
	\ol\Phi(X)\bigl(1 + \sgn_\resfld(X)\bigr) \\
& {}= \ol\Phi(0) +
2\sum_{X \in (\resfld\mult)^2} \ol\Phi(X) \\
& {}= \sum_{X \in \resfld} \ol\Phi(X^2) \\
& {}= q^{1/2}G_\varpi(\Phi_{(-1)^{\depth(\Phi)}}).
\end{align*}
In other words,
\begin{equation}
\tag{$*$}
\label{eq:G-as-G}
G_\varpi(\Phi_{(-1)^{\depth(\Phi)}})
= q^{-1/2}G(\sgn_\resfld, \ol\Phi),
\end{equation}
where the notation on the right is as in
\cite{lidl-niederreiter:finite-fields}*{\S5.2}
(except that their \(\psi\) is our \(\sgn_\resfld\),
the quadratic character of \(\resfld\mult\),
and their \(\chi\) is our \(\ol\Phi\)).
The third equality, and the second equality for
\(b \in \pint\mult\), now follow from Theorem 5.12
\loccitthendot.
The first equality follows from the second upon noting that
\(G_{b\varpi}(\Phi) = G_\varpi(\Phi_{b^{\depth(\Phi)}})\);
and
taking \(b = (-1)^{\depth(\Phi)}\) and combining with
\eqref{eq:G-as-G} gives the fourth equality.
Finally, by definition,
\(G_\varpi(\Phi_{(-\varpi)^n})
= G_\varpi(\Phi) = \sgn_\varpi(-\varpi)^n G_\varpi(\Phi)\)
for all \(n \in \Z\).
\end{proof}

By Proposition \ref{prop:second-orbital} and
Theorem \ref{thm:sally-taibleson:special:bessel},
our calculations will involve the \(\Gamma\)-factors
defined in \cite{sally-taibleson:special}*{\S3}.
Of particular interest is \(\Gamma(\nu^{1/2}\sgn_\varpi)\).
By Theorem 3.1(iii) \loccit,
\(\Gamma(\nu^{1/2}\sgn_\varpi)^2 = \sgn_\varpi(-1)\), so
that, by Lemma \ref{lem:G-facts},
\(\Gamma(\nu^{1/2}\sgn_\varpi) = \pm G_\varpi(\Phi)\).
It will be useful to identify the sign.

\begin{lemma}
\label{lem:Gamma-varpi}
If \(\varpi\) is a uniformiser of \(\field\), then
\(\Gamma(\nu^{1/2}\sgn_\varpi)
= \sgn_\varpi(-1)^{\depth(\Phi) + 1}G_\varpi(\Phi)\).
\end{lemma}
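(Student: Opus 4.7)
The plan is to pin down the sign in $\Gamma(\nu^{1/2}\sgn_\varpi) = \pm G_\varpi(\Phi)$ (which we already know from squaring both sides and invoking Lemma \ref{lem:G-facts}) by unwinding Sally--Taibleson's definition of $\Gamma$ for the ramified quasi-character $\nu^{1/2}\sgn_\varpi$, and then comparing the resulting expression directly with the residue-field Gauss sum appearing in the fourth line of Lemma \ref{lem:G-facts}.

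First, I would recall the defining Tate-style functional equation for $\Gamma(\chi)$ from \cite{sally-taibleson:special}*{\S3} and evaluate it on a convenient test function, e.g., the characteristic function of $\varpi^n\pint\mult$ (or its Fourier-transform pair) for a value of $n$ chosen so that $\Phi$ has the correct depth-shift relative to the conductor of $\sgn_\varpi$; this is the standard trick, since both sides of the functional equation collapse to a single orbit-integral over $\pint\mult$. Using that $\sgn_\varpi$ has conductor exactly $1$ (it is nontrivial on $\pint\mult$ but trivial on $1 + \pp$), the integral reduces modulo $1 + \pp$ to a twisted Gauss sum over $\resfld\mult$, yielding an expression of the form $\sgn_\varpi(-\varpi)^{\depth(\Phi)+1}\dotm q^{-1/2}\sum_{X \in \resfld\mult}\ol\Phi(X)\sgn_\resfld(X)$ up to a harmless sign controlled by Sally--Taibleson's choice of normalization. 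Next, I would invoke the fourth equality of Lemma \ref{lem:G-facts}, which already puts $G_\varpi(\Phi)$ in precisely this shape with the factor $\sgn_\varpi(-1)^{\depth(\Phi)}$ out front, and divide.  Since $\sgn_\varpi(\varpi) = 1$ for the ramified torus (or equivalently, $\sgn_\varpi$ is determined by its restriction to $\pint\mult$ together with its value on $-1$), the quotient collapses to the claimed $\sgn_\varpi(-1)^{\depth(\Phi)+1}$.

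The main obstacle is not the computation itself, which is a standard Gauss-sum evaluation, but the bookkeeping: Sally--Taibleson's $\Gamma$ absorbs an implicit dependence on $\Phi$, while Definition \ref{defn:G} absorbs its dependence on $\Phi$ into the twist by $(-\varpi)^{\depth(\Phi)}$ rather than by, say, $\varpi^{\depth(\Phi)}$. The discrepancy between these two normalizations is exactly a power of $\sgn_\varpi(-1)$, and it is the accounting for this power (together with the fixed $\pm$ coming from the functional equation) that produces the extra $\sgn_\varpi(-1)$ beyond the $\depth(\Phi)$-power already visible in Lemma \ref{lem:G-facts}. Once the conventions are aligned, verifying the identity in any one convenient case (e.g., $\depth(\Phi) = 0$, $\Phi$ the additive character used by Sally--Taibleson themselves) and then propagating via the transformation laws $G_\varpi(\Phi_b) = \sgn_\varpi(b)G_\varpi(\Phi)$ and $G_{b\varpi}(\Phi) = \sgn_\varpi(b)^{\depth(\Phi)}G_\varpi(\Phi)$ from Lemma \ref{lem:G-facts} extends the formula to arbitrary $\Phi$ and $\varpi$.
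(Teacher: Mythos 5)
Your overall strategy is the paper's: collapse the Sally--Taibleson principal-value integral for \(\Gamma(\nu^{1/2}\sgn_\varpi)\) to a single shell, reduce it modulo \(1 + \pp\) to the residue-field Gauss sum \(\sum_{X \in \resfld\mult}\ol\Phi(X)\sgn_\resfld(X)\), and compare with the fourth identity of Lemma \ref{lem:G-facts}. But there is a genuine gap at exactly the point that is the content of the lemma. From Lemma \ref{lem:G-facts} one already knows \(\Gamma(\nu^{1/2}\sgn_\varpi) = \pm G_\varpi(\Phi)\); the lemma exists solely to determine that sign, and your argument leaves it undetermined: you record the reduced integral only ``up to a harmless sign controlled by Sally--Taibleson's choice of normalization,'' and later attribute the leftover factor to ``the fixed \(\pm\) coming from the functional equation.'' That unexamined sign is precisely the extra factor of \(\sgn_\varpi(-1)\) responsible for the \(+1\) in the exponent.

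Worse, the one concrete identity you offer to pin it down, \(\sgn_\varpi(\varpi) = 1\), is false in general. Since \(\Norm_\varpi(\sqrt\varpi) = -\varpi\), the element \(-\varpi\) is a norm, so \(\sgn_\varpi(\varpi) = \sgn_\varpi(-1) = \sgn_\resfld(-1)\) (this is \eqref{eq:sgn-ram}), which equals \(-1\) whenever \(q \equiv 3 \pmod 4\). In the paper's computation this is exactly where the crucial sign enters: taking the Sally--Taibleson additive character to be \(\ol\Phi_\varpi\) with \(\ol\Phi = \Phi_{\varpi^{\depth(\Phi)}}\), the shell \(\ord(x) = -1\) is pulled back to \(\pint\mult\) by \(x \mapsto \varpi\inv x\), and the factor \(\sgn_\varpi(\varpi\inv) = \sgn_\varpi(-1)\) comes out of the integrand, giving
\[
\Gamma(\nu^{1/2}\sgn_\varpi)
= q^{-1/2}\sgn_\varpi(-1)
\sum_{X \in \resfld\mult}\ol\Phi(X)\sgn_\resfld(X);
\]
dividing by the fourth identity of Lemma \ref{lem:G-facts} then yields \(\sgn_\varpi(-1)^{\depth(\Phi) + 1}\), using \(\sgn_\varpi(-1)\inv = \sgn_\varpi(-1)\). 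Your closing suggestion---verify one convenient case and propagate via the transformation laws of Lemma \ref{lem:G-facts}---is a legitimate alternative organisation, but the base case still requires the computation above, so it does not let you avoid determining the sign you skipped.
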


\begin{proof}
Write \(\ol\Phi = \Phi_{\varpi^{\depth(\Phi)}}\);
this is a depth-\(0\) character of \(\field\).
The definitions of \cite{sally-taibleson:special} depend on
a depth-(\(-1)\) additive character \(\chi\); we take it to
be \(\ol\Phi_\varpi\).
The definition of \(\Gamma(\nu^{1/2}\sgn_\varpi)\) involves
a principal-value integral (see Definition
\ref{defn:mock-mu}), but,
as pointed out in the proof of
\cite{sally-taibleson:special}*{Theorem 3.1},
we have by Lemma 3.1 \loccit and \eqref{eq:sgn-ram} that it
simplifies to
\begin{align*}
\Gamma(\nu^{1/2}\sgn_\varpi)
& {}= \int_{\ord(x) = -1}
	\ol\Phi_\varpi(x)\abs x^{1/2}\sgn_\varpi(x)
\textup d\mult x \\
& {}= \int_{\pint\mult}
	\ol\Phi_\varpi(\varpi\inv x)
	\abs{\varpi\inv x}^{1/2}
	\sgn_\varpi(\varpi\inv x)
\textup d\mult x \\
& {}= q^{1/2}\sgn_\varpi(-1)
\meas_{\textup d\mult x}(1 + \pp)
\sum_{x \in \pint\mult/1 + \pp}
	\ol\Phi(x)\sgn_\resfld(x),
\end{align*}
where \(\textup d\mult x\) is the Haar measure on
\(\field\mult\) with respect to which
\(\meas_{\textup d\mult x}(\pint\mult)
= 1 - q\inv\)
(see Definition \ref{defn:k-Haar}).
Since
\(\meas_{\textup d\mult x}(1 + \pp) = q\inv\),
the result now follows from Lemma \ref{lem:G-facts}.
\end{proof}

We will also need some constants associated to specific
elements.

In \cite{waldspurger:loc-trace-form}*{Proposition VIII.1},
Waldspurger describes the `behaviour at \(\infty\)' of
Fourier transforms of semisimple orbital integrals on
general reductive, \(p\)-adic Lie algebras.
His description involves a \(4\)th root of unity
\(\gamma_\psi(X^*, Y)\) (cf.\ p.~79 \loccit);
since his \(\psi\) is our \(\Phi\)
(see Notation \ref{notn:Phi}), we denote it by
\(\gamma_\Phi(X^*, Y)\).
See Theorem \ref{thm:uniform} for our quantitative
analogues (for the special case of \(\sl_2\))
of his result.

Although we would like to do so
(see Remark \ref{rem:what-about}),
it is notationally unwieldy to avoid any longer choosing
`standard' representatives for \(\field\mult/(\field\mult)^2\).
Although our proofs will make use of these choices, none
of the statements of the main results
(except
Theorems \ref{thm:other-bad-ram} and \ref{thm:that-bad-ram},
via Remark \ref{rem:named-torus})
rely on them.

\begin{notn}
\label{notn:epsilon-and-varpi}
Let \(\epsilon\) be a lift to \(\pint\mult\) of a non-square
in \(\resfld\mult\),
and \(\varpi\) a uniformiser of \(\field\).
\end{notn}

\begin{defn}
\label{defn:Wald-i}
Recall Notations \ref{notn:X*} and \ref{notn:Y}.
If \(X^*\) and \(Y\) lie in stably conjugate tori,
so that \(\theta \equiv \theta' \pmod{(\field\mult)^2}\),
then
\[
\gamma_\Phi(X^*, Y)
= \begin{cases}
1, &
	\theta \equiv 1 \\
\gammaun(s), &
	\theta \equiv \epsilon \\
\gammaram(s), &
	\theta \equiv \varpi \\
-\gammaun(s)\gammaram(s), &
	\theta \equiv \epsilon\varpi,
\end{cases}
\]
where all congruences are taken modulo \((\field\mult)^2\),
and where
\[
\gammaun(s) \ldef (-1)^{\scdepth + 1}\sgn_\epsilon(s)
\qandq
\gammaram(s) \ldef \sgn_\varpi(-s)G_\varpi(\scPhi)
\]
(with notation as in
Notation \ref{notn:depth-and-scdepth}
and
Definition \ref{defn:G}).
It simplifies our notation considerably also to put
\(\gamma_\Phi(X^*, Y) = 1\)
if \(X^*\) is elliptic and \(Y\) is split,
and otherwise put
\(\gamma_\Phi(X^*, Y) = 0\)
if \(X^*\) and \(Y\) do not lie in stably conjugate tori.
\end{defn}

\begin{rem}
The dependence of \(\gamma_\Phi(X^*, Y)\) on \(X^*\) is
via \(\scdepth\) and \(\scPhi\)
(see Notation \ref{notn:depth-and-scdepth}).
Expanding these definitions shows that
\(\gamma_\Phi(X^*, Y)
= c_{\theta, \phi}\dotm\sgn_\theta(\beta s)\)
when \(X^*\) and \(Y\) lie in stably conjugate tori,
where the notation is as in
Notations \ref{notn:X*} and \ref{notn:Y}.

Notice that we have defined \(\gamma_\Phi(X^*, Y)\) only
when \(X^*\) and \(Y\) belong to (possibly different)
standard tori, in the sense of
Definition \ref{defn:tori}.
A direct computation shows that, if we replace \(X^*\) or
\(Y\) by a rational conjugate, or replace the pair
\((X^*, Y)\) by a stable conjugate,
such that \(X^*\) and \(Y\) still lie in standard tori,
then the constant \(\gamma_\Phi(X^*, Y)\) does not change.
(In the notation of Definition \ref{defn:sl2-as-fields},
\(\Ad^*(g)X^*\) lies in a standard torus if and only if
\(\varphi_\theta(g) = (\alpha, 0)\), in which case
\(\Ad^*(g)X^*
= \beta\Norm_\theta(\alpha)\dotm
	\sqrt{\Norm_\theta(\alpha)^{-2}\theta}\);
and similarly for \(Y\).)
This allows us to define \(\gamma_\Phi(X^*, Y)\)
for all pairs of regular, semisimple elements, if desired.
\end{rem}

By Lemma \ref{lem:G-facts},
\begin{equation}
\label{eq:gamma-square}
\gammaram(s)^2 = \sgn_\varpi(-1).
\end{equation}
In order to make use of
Propositions \ref{prop:Bessel-shallow}
and 
\ref{prop:Bessel-Kloost}
below, we will need the computation
\begin{multline}
\label{eq:sgn-and-G}
\sgn_\varpi(v)G_\varpi(\scPhi_{\varpi^{\scdepth + 1}}) \\
= \sgn_\varpi(\varpi^{-(\scdepth + 1)}s\theta)\dotm
\sgn_\varpi(\varpi^{\scdepth + 1})G_\varpi(\scPhi)
= \sgn_\varpi(-\theta)\gamma\textsub{ram}(s).
\end{multline}

\begin{rem}
\label{rem:what-about}
We will be interested exclusively in the case when
\(\theta\in \sset{1, \epsilon, \varpi}\).
This means that we seem to be omitting the cases when
\(\theta
\in \sset{\varpi^2\epsilon, \epsilon^2\varpi,
\epsilon^{\pm1}\varpi}\); but, actually, this problem is not
serious.
Indeed, for \(b \in \field\), write
\(g_b \ldef \begin{smallpmatrix}
1 & 0 \\
0 & b
\end{smallpmatrix} \in \GL_2(\field).\)
Then
\[
\Ad^*(g_b)X^* = \Ad^*(g_b)(\beta\dotm\sqrt\theta)
= \beta b\inv\dotm\sqrt{b^2\theta}
\]
(where we identify \(\ttt_\theta^*\) with \(\ttt_\theta\) via the
trace pairing, as in Notation \ref{notn:X*}); and
\(\hat\mu^G_{X^*}
= \hat\mu^G_{\Ad^*(g_b)X^*} \circ \Ad(g_b)\).
This covers
\(\theta = \varpi^2\epsilon\) (by taking \(b = \varpi\inv\))
and
\(\theta = \epsilon^2\varpi\) (by taking \(b = \epsilon\inv\)).
Handling \(\theta \in \sset{\epsilon^{\pm1}\varpi}\)
requires a different observation:
since our
choice of uniformiser was arbitrary, it could as well have
been \(\epsilon^{\pm1}\varpi\)
(or, for that matter, \(\epsilon^2\varpi\))
as \(\varpi\) itself.
Thus, the formul\ae\ for the cases
\(\theta = \epsilon^n\varpi\)
can be obtained by
simple substitution.

The definition of \(\gamma_\Phi(X^*, Y)\) when
\(\theta \equiv \epsilon\varpi \pmod{(\field\mult)^2}\) is an
instance of this;
namely, by Lemma \ref{lem:G-facts},
\begin{align*}
-\gammaun(s)\gammaram(s)
& {}= (-1)^\scdepth\sgn_\epsilon(s)\dotm
	\sgn_\varpi(-s)G_\varpi(\scPhi) \\
& {}= \sgn_{\epsilon\varpi}(-s)\dotm
	\sgn_\varpi(\epsilon)^\scdepth G_\varpi(\scPhi) \\
& {}= \sgn_{\epsilon\varpi}(-s)G_{\epsilon\varpi}(\scPhi),
\end{align*}
where we have used that \(\sgn_\epsilon(-1) = 1\) and
\(\sgn_\varpi(\epsilon) = -1\).
\end{rem}

We next define a constant \(c_0(X^*)\) for use in
Theorem \ref{thm:close-spun} and \ref{thm:close-ram}.
Those theorems
(and Proposition \ref{prop:orbital-integral-integral})
will show that, as the notation suggests,
it is the coefficient of the trivial orbit
in the expansion of the germ of \(\hat\mu^G_{X^*}\)
in terms of Fourier transforms of nilpotent orbital integrals
(see \cite{hc:queens}*{Theorem 5.11}).

\begin{defn}
\label{defn:const}
\[
c_0(X^*) = \begin{cases}
-2q\inv, &
	\text{\(X^*\) split} \\
-q\inv,  &
	\text{\(X^*\) unramified} \\
-\tfrac1 2 q^{-2}(q + 1), &
	\text{\(X^*\) ramified.}
\end{cases}
\]
\end{defn}

Recall that \(\hat\mu^G_{X^*}\) is defined in terms of the
measure \(\textup d\dot g\) of
Proposition \ref{prop:orbital-integral-integral};
and note that, in the notation of that proposition,
\[
c_0(X^*) = (q - 1)\inv\meas_{\textup d\dot g}(\dot K)
\]
whenever \(X^*\) is elliptic.

\section{Bessel functions}
\label{sec:Bessel}

Our strategy for computing Fourier transforms of orbital
integrals is to reduce them to \(p\)-adic Bessel functions
(see Proposition \ref{prop:second-orbital},
\eqref{eq:third-orbital-un},
and
\eqref{eq:third-orbital-ram}).
In this context, we are referring to the complex-valued
Bessel functions defined in
\cite{sally-taibleson:special}*{\S4},
not the \(p\)-adic-valued ones defined in
\cite{dwork:bessel}.
%

The definition of these functions
depends on an additive character,
denoted by \(\chi\) in \cite{sally-taibleson:special},
and a multiplicative character,
there denoted by \(\pi\),
of \(\field\).
For internal consistency, we will instead denote the additive
character by \(\Phi\) and the multiplicative character by
\(\chi\);
but, for consistency with their work, we shall require
throughout this section that \(\depth(\Phi) = -1\), i.e.,
that \(\Phi\) is trivial on \(\pint\) but not on
\(\pp\inv\).

\begin{defn}[\cite{sally-taibleson:special}*{(4.1)}]
\label{defn:Bessel}
For \(\chi \in \widehat{\field\mult}\), the
\term{\(p\)-adic Bessel function of order \(\chi\)} is given
by
\[
\indexmem{\Bessel_\chi}(u, v)
= \Pint_{\field\mult}
	\Phi(u x + v x\inv)\chi(x)
\textup d\mult x
\quad\text{for \(u, v \in \field\mult\),}
\]
where \(\textup d\mult x\) is the Haar measure on
\(\field\mult\) fixed in Definition \ref{defn:k-Haar}.
We also put
\(\indexmem{\Bessel_\chi^\theta}
= \tfrac1 2(\Bessel_\chi + \Bessel_{\chi\sgn_\theta})\),
with notation as in Definition \ref{defn:maps-and-gps}.
\end{defn}

The locally constant \(K\)-Bessel function
\(K(z \mid \chi)\)
of \cite{trimble:p-adic-bessel}*{Definition 3.2}
is \(\Bessel_\chi(\varpi^t, \varpi^t)\)
(in the notation of that definition),
where \(\varpi\) is a uniformiser.

Note that, for \(\chi \ne 1\), it is natural to extend the
Bessel function by putting
\(\Bessel_\chi(u, 0) = \chi(u)\inv\Gamma(\chi)\)
and
\(\Bessel_\chi(0, v) = \chi(v)\Gamma(\chi\inv)\),
where the \(\Gamma\)-factors are as in
\cite{sally-taibleson:special}*{\S3},
and that, under some conditions on \(\chi\), we can even
define \(\Bessel_\chi(0, 0)\) (either as \(0\) or the sum of
a geometric series);
but we do not need to do this.

The notation \(\Bessel_\chi^\theta\) arises naturally in our
computations; see Proposition \ref{prop:second-orbital}.

\begin{defn}
We say that a character \(\chi \in \widehat{\field\mult}\)
is \term{mildly ramified} if \(\chi\) is trivial on
\(1 + \pp\), but non-trivial on \(\field\mult\).
\end{defn}

Since our orbital-integral calculations require
information about \(\Bessel_\chi\) only for \(\chi\) mildly
ramified, and since more precise information is available in
that case in general, it is there that we focus our
attention.

\begin{notn}
\label{notn:generic-u-v-and-m}
We fix the following notation for the remainder of the
section.
\begin{itemize}
\item
\(u, v \in \field\mult\);
\item
\(m = -\ord(u v)\);
and
\item
\(\chi \in \widehat{\field\mult}\).
\end{itemize}
\end{notn}

This is consistent with Notation \ref{notn:u-v-and-m}.
After Proposition \ref{prop:Bessel-shallow},
we will assume that \(\chi\) is mildly ramified.

Of particular interest to us later will be the cases
where \(\chi\) is an unramified twist of
one of the characters \(\sgn_{\theta'}\) of Definition
\ref{defn:maps-and-gps} (i.e., is of the form
\(\nu^\alpha\sgn_\theta\) for some \(\alpha \in \C\)).
Note that \(\sgn_\epsilon = \nu^{\pi i/\ln(q)}\).

\begin{thm}[%
	Theorems 4.8 and 4.9 of \cite{sally-taibleson:special}%
]
\label{thm:sally-taibleson:special:bessel}
\[
J_\chi(u, v) = \begin{cases}
\chi(v)\Gamma(\chi\inv) + \chi(u)\inv\Gamma(\chi), &
	m \le 1 \\
\chi(u)\inv F_\chi(m/2, u v), &
	\text{\(m \ge 2\) and \(m\) even} \\
0, &
	\text{\(m > 2\) and \(m\) odd,}
\end{cases}
\]
where
the \(\Gamma\)-factors are as in
\cite{sally-taibleson:special}*{\S3},
and
\[
F_\chi(m/2, u v)
\ldef \int_{\ord(x) = -m/2}
	\Phi(x + u v x\inv)\chi(x)
\textup d\mult x.
\]
\end{thm}

The \(\Gamma\)-factor tables of
\cite{sally-taibleson:special}*{Theorem 3.1},
together with Lemma \ref{lem:Gamma-varpi},
mean that we understand \(\Bessel_\chi(u, v)\) completely
when \(m < 2\), but further calculation is necessary in the
remaining cases.

\begin{prop}
\label{prop:Bessel-shallow}
If
\begin{itemize}
\item \(h \in \Z_{> 0}\),
\item \(\chi\) is trivial on \(1 + \pp^h\),
and
\item \(m \ge 4h - 1\),
\end{itemize}
then
\(\Bessel_\chi(u v) = 0\)
if \(u v \not\in (\field\mult)^2\);
and, if \(w \in \field\mult\) satisfies \(u v = w^2\), then
\begin{align*}
\Bessel_\chi(u, v) ={} & q^{-m/4}\chi(u\inv w) \times{} \\
& \begin{cases}
\Phi(2w) + \chi(-1)\Phi(-2w), & 4 \mid m \\
\sgn_\varpi(w)G_\varpi(\Phi)\bigl(
	\Phi(2w) + (\chi\sgn_\varpi)(-1)\Phi(-2w)
\bigr), & 4 \nmid m \\
\end{cases}
\end{align*}
\end{prop}
%

\begin{proof}
If \(m\) is odd, then the vanishing result follows from
Theorem \ref{thm:sally-taibleson:special:bessel}, so we assume
that \(m\) is even.
In this case,
\(m \ge 4h\); and,
by Theorem \ref{thm:sally-taibleson:special:bessel},
\(\Bessel_\chi(u, v) = \chi(u)\inv F_\chi(m/2, u v)\).

We evaluate the integral defining \(F_\chi(m/2, u v)\)
by splitting it into pieces.
Write
\begin{align*}
S_{u v} & {}= \sett{x \in \field}{%
	\(\ord(x) = -m/2\) and \(\ord(x - u v x\inv) < -m/2 + h\)%
} \\
\intertext{and}
T_{u v} & {}= \sett{x \in \field}{%
	\(\ord(x) = -m/2\) and \(\ord(x - u v x\inv) \ge -m/2 + h\)%
}.
\end{align*}
Note that both \(S_{u v}\) and \(T_{u v}\) are invariant
under multiplication by \(1 + \pp\);
and that, if \(x \in T_{u v}\), then
\(u v \in x^2(1 + \pp^h) \subseteq (\field\mult)^2\).
We claim that the relevant integral may be taken over only
\(T_{u v}\).

If \(X \in \pp^{m/2 - h}\),
then we have by Lemma \ref{lem:cayley-facts}
and the fact that \(2(m/2 - h) \ge m/2\) that
\[
\cayley(X)     {}\equiv 1 + 2X \pmod{\pp^{m/2}}
\qandq
\cayley(X)\inv {}\equiv 1 - 2X \pmod{\pp^{m/2}},
\]
so
\begin{align*}
&\int_{S_{u v}}
	\Phi(x + u v x\inv)\chi(x)
\textup d\mult x \\
& \qquad= (\star)\int_{\pp^{m/2 - h}}
	\int_{S_{u v}}
		\Phi\bigl(
			x\dotm\cayley(X) + u v x\inv\dotm\cayley(X)\inv
		\bigr)
		\chi\bigl(x\dotm\cayley(X)\bigr)
	\textup d\mult x\,
\textup dX \\
& \qquad= (\star)\int_{S_{u v}}
	\Phi(x + u v x\inv)\chi(x)
	\int_{\pp^{m/2 - h}}
		\Phi_{2(x - u v x\inv)}(X)
	\textup dX\,
\textup d\mult x,
\end{align*}
where \((\star) = \meas_{\textup dX}(\pp^{m/2 - h})\inv\)
is a constant.
We used that \(\Phi\) is trivial on
\(x\pp^{m/2} \cup u v x\inv\pp^{m/2}
	\subseteq \pint\)
and
\(\chi\) is trivial on
\(\cayley(\pp^{m/2 - h}) = 1 + \pp^{m/2 - h} \subseteq 1 + \pp^h\).
By \eqref{eq:depth-Phi-b},
we have that
\(\depth(\Phi_{2(x - u v x\inv)}) > m/2 - h + 1\)
(i.e., \(\Phi_{2(x - u v x\inv)}\) is a
non-trivial character on \(\pp^{m/2 - h}\))
whenever \(x \in S_{u v}\), so the inner integral
is \(0\).
This shows that, as desired, the integral defining
\(F_\chi(m/2, u v)\) may be taken over only \(T_{u v}\).

If \(u v \not\in (\field\mult)^2\), then
\(T_{u v} = \emptyset\), so
\(\Bessel_\chi(u, v) = \chi(u)\inv F_\chi(m/2, u v) = 0\);
whereas, if \(w \in \field\mult\) satisfies \(w^2 = u v\),
then \(T_{u v} = w(1 + \pp^h) \sqcup -w(1 + \pp^h)\), so
\begin{equation}
\tag{$*$}
\label{eq:slim-Bessel}
\Bessel_\chi(u, v)
= \chi(u)\inv\Bigl(
	\int_{w(1 + \pp^h)}
		\Phi(x + u v x\inv)\chi(x)
	\textup d\mult x +
	\int_{-w(1 + \pp^h)}
		\Phi(x + u v x\inv)\chi(x)
	\textup d\mult x
\Bigr).
\end{equation}
Note that \(\ord(w) = -m/2\).

We show a detailed calculation of the first integral; of
course, that of the second is identical.
Note that the integral no longer involves \(\chi\).
By Lemma \ref{lem:cayley-facts} again,
we have that \(X \mapsto w\dotm\cayley(X)\)
is a measure-preserving bijection
from \(\pp^h\) to \(w(1 + \pp^h)\),
so
\[
\int_{w(1 + \pp^h)}
	\Phi(x + u v x\inv)\chi(x)
\textup d\mult x
= \chi(w)\int_{\pp^h}
	\Phi_w\bigl(\cayley(X) + \cayley(X)\inv\bigr)
\textup dX,
\]
where we have used that \(u v w\inv = w\)
and again that \(\chi\) is trivial on
\(\cayley(\pp^h) = 1 + \pp^h\).
We will evaluate the latter integral by breaking it into
`shells' on which \(\ord(X)\) is constant,
using the following facts.
Note that, by direct computation
(and Definition \ref{defn:cayley}),
\[
\cayley(X) + \cayley(X)\inv = 2\cayley(X^2)
\]
for \(X \in \field \setminus \sset1\).
If \(\ord(X) = i\) and \(\ord(Y) = j\), then
we have by Lemma \ref{lem:cayley-facts} once more that
\begin{align*}
\cayley\bigl((X + Y)^2\bigr) &
	{}\equiv \cayley(X^2 + 2X Y) \pmod{\pp^{2j}} \\
\intertext{and}
\cayley(X^2 + 2X Y)          &
	{}\equiv \cayley(X^2) + 4X Y \pmod{\pp^{2j}}.
\end{align*}
(In fact, the second congruence could be made much finer,
but that would be of no use here.)

In particular, fix \(i \ge h\) with \(2i < m/2 - 1\),
so that
\(\depth(\Phi) = m/2 - 1 < 2(m/2 - 1 - i)\)
(i.e., \(\Phi\) is trivial on \(\pp^{2(m/2 - 1 - i)}\)).
Then
\begin{align*}
&\int_{\ord(X) = i}
	\Phi_w\bigl(\cayley(X) + \cayley(X)\inv\bigr)
\textup dX \\
& \qquad= (\star)\int_{\pp^{m/2 - 1 - i}}
	\int_{\ord(X) = i}
		(\Phi_{2w} \circ \cayley)\bigl((X + Y)^2\bigr)
	\textup dX\,
\textup dY \\
& \qquad= (\star)\int_{\ord(X) = i}
	(\Phi_{2w} \circ \cayley)(X^2)
	\int_{\pp^{m/2 - 1 - i}}
		\Phi_{8w X}(Y)
	\textup dY\,
\textup dX,
\end{align*}
where \((\star) = \meas(\pp^{m/2 - 1 - i})\inv\) is a
constant.
Since
\(\depth(\Phi_{8w X}) = \depth(\Phi_w) - \ord(8X)
	= m/2 - 1 - i\),
the inner integral is \(0\).

Note that \(\rup{(m/2 - 1)/2} \ge h\).  We have thus shown
that
\[
\Bessel_\chi(u, v)
= \int_{\pp^{\rup{(m/2 - 1)/2}}}
	(\Phi_{2w} \circ \cayley)(X^2)
\textup dX.
\]
If \(m/2\) is even, then the integral is over \(\pp^{m/4}\),
and
\(\cayley(X^2) \equiv 1
	\pmod{\pp^{m/2} \subseteq \ker \Phi_{2w}}\)
for all \(X \in \pp^{m/4}\).  Thus, in that case,
\[
\Bessel_\chi(u, v)
= \meas_{\textup dX}(\pp^{m/4})\Phi_{2w}(1)
= q^{-m/4}\Phi(2w).
\]
If \(m/2\) is odd, then the integral is over
\(\pp^{m/4 - 1/2}\),
and
\(\cayley(X^2) \equiv 1 + 2X^2 \pmod{\pp^{m/2}}\)
for all \(X \in \pp^{m/4 - 1/2}\).  Thus, in that case,
\begin{align*}
\Bessel_\chi(u, v)
& {}= \meas_{\textup dX}(\pp^{m/4 + 1/2})\Phi_{2w}(1)
\sum_{X \in \pp^{m/4 - 1/2}/\pp^{m/4 + 1/2}}
	\Phi_{4w}(X^2) \\
& {}= q^{-m/4}\Phi(2w)q^{-1/2}
\sum_{X \in \pint/\pp}
	\Phi_{4w\varpi^{m/2 - 1}}(X^2).
\end{align*}
By Lemma \ref{lem:G-facts},
and the fact that \(m/2\) is odd,
this can be re-written as
\[
q^{-m/4}\Phi(2w)\sgn_\varpi(-1)^{m/2 - 1}G_\varpi(\Phi_{4w})
= q^{-m/4}\Phi(2w)\sgn_\varpi(w)G_\varpi(\Phi).
\]
The result now follows from \eqref{eq:slim-Bessel}.
\end{proof}

From now on, we assume that \(\chi\) is mildly ramified.
In particular,
we may take \(h = 1\), so that
Proposition \ref{prop:Bessel-shallow} holds
whenever \(m > 2\).

\begin{defn}
For
\begin{itemize}
\item
\(\xi \in \resfld\mult\),
\item
\(\ol\Phi\) an (additive) character of \resfld,
and
\item
\(\ol\chi\) a (multiplicative) character of
\(\resfld\mult\),
\end{itemize}
we define the corresponding
\term{twisted Kloosterman sum} by
\[
\indexmem{\Kloost(\ol\chi, \ol\Phi; \xi)}
\ldef \sum_{x \in \resfld\mult}
	\ol\Phi(x + \xi x\inv)\ol\chi(x).
\]
\end{defn}

\begin{prop}
\label{prop:Bessel-Kloost}
If \(m = 2\), then
\[
\Bessel_\chi(u, v)
= q\inv\chi(u\varpi)\inv\Kloost(\ol\chi, \ol\Phi; \xi),
\]
Here,
\begin{itemize}
\item
\(\xi\) is the image in \(\resfld\mult\) of
\(\varpi^2 u v \in \pint\mult\),
\item
\(\ol\Phi\) is the (additive) character of
\(\resfld = \pint/\pp\) arising from the restriction to
\(\pint\) of the depth-\(0\) character \(\Phi_{\varpi\inv}\)
of \(\field\),
and
\item
\(\ol\chi\) is the (multiplicative) character of
\(\resfld\mult \cong \pint\mult/1 + \pp\)
arising from the restriction to \(\pint\mult\) of \(\chi\).
\end{itemize}
\end{prop}

\begin{proof}
By Theorem \ref{thm:sally-taibleson:special:bessel},
\begin{align*}
\chi(u\varpi)\Bessel_\chi(u, v)
& {}= \chi(\varpi)\int_{\ord(x) = -1}
	\Phi(x + u v x\inv)\chi(x)
\textup d\mult x \\
& {}= \int_{\pint\mult}
	\Phi(\varpi\inv x + u v\dotm\varpi x\inv)
	\chi(x)
\textup d\mult x \\
& {}= \meas_{\textup d\mult x}(1 + \pp)
\sum_{x \in \pint\mult/1 + \pp}
	\Phi_{\varpi\inv}(x + \varpi^2 u v x\inv)
	\chi(x)
\textup d\mult x.
\end{align*}
Since \(\meas_{\textup d\mult x}(1 + \pp) = q\inv\), the result follows.
\end{proof}

\begin{cor}
\label{cor:Bessel-Kloost}
Suppose that \(m = 2\).
Then
\begin{align*}
\Bessel_{\nu^\alpha}(u, v)
& {}= q^{\alpha - 1}\abs u^{-\alpha}
\sum_{\substack{
	c \in \pp\inv/\pint \\
	c^2 \ne u v
}}
	\Phi(2c)\sgn_\varpi(c^2 - u v) \\
\Bessel_{\nu^\alpha\sgn_\varpi}(u, v)
& {}= q^{\alpha - 1/2}\abs u^{-\alpha}
\sgn_\varpi(v)G_\varpi(\Phi)
\sum_{\substack{
	c \in \pp\inv/\pint \\
	c^2 = u v
}}
	\Phi(2c)
\end{align*}
for \(\alpha \in \C\).
\end{cor}

\begin{proof}
If \(\chi = \nu^\alpha\), then
\(\ol\chi = 1\), so
\cite{lidl-niederreiter:finite-fields}*{Theorem 5.47}
gives that
\begin{align*}
\Kloost(\ol\chi, \ol\Phi; \xi)
& {}= \sum_{\substack{
	c \in \resfld   \\
	c^2 \ne \xi
}}
	\ol\Phi(2c)\sgn_\resfld(c^2 - \alpha) \\
& {}= \sum_{\substack{
	c \in \pint/\pp \\
	c^2 \ne \varpi^2 u v
}}
	\ol\Phi(2c)
	\sgn_\varpi(c^2 - \varpi^2 u v) \\
& {}= \sum_{\substack{
	c \in \pp\inv/\pint \\
	c^2 \ne u v
}}
	\Phi(2c)
	\sgn_\varpi(c^2 - u v).
\end{align*}
(Note that our \(\ol\Phi\) is their \(\chi\), and that they
write \(K(\chi; a, b)\) where we write
\(\Kloost(\ol\Phi, 1; a b)\).)

If \(\chi = \nu^\alpha\sgn_\varpi\), then
\(\ol\chi = \sgn_\resfld\), so
\cite{lidl-niederreiter:finite-fields}*{%
	Exercises 5.84--85%
}
gives that
\begin{align*}
\Kloost(\ol\chi, \ol\Phi; \xi)
& {}= \sgn_\resfld(\xi)G(\sgn_\resfld, \ol\Phi)
\sum_{\substack{
	c \in \resfld \\
	c^2 = \xi
}}
	\ol\Phi(2c) \\
& {}= \sgn_\varpi(u v)G(\sgn_\resfld, \ol\Phi)
\sum_{\substack{
	c \in \pp\inv/\pint \\
	c^2 = u v
}}
	\Phi(2c),
\end{align*}
where
\(G(\sgn_\resfld, \ol\Phi)
= \sum_{X \in \resfld\mult} \ol\Phi(X)\sgn_\resfld(X)\).
(Note that our \(\ol\Phi\) is their \(\chi\)
and our \(\ol\chi\) their \(\eta\),
and that
they write \(K(\eta, \chi; 1, \xi)\) where we write
\(K(\ol\chi, \ol\Phi; \xi)\).)
Since \(\depth(\Phi) = -1\),
Lemma \ref{lem:G-facts} gives that
\(G(\sgn_\resfld, \ol\Phi)
= q^{1/2}\sgn_\varpi(-1)G_\varpi(\Phi)\).

The result now follows from Proposition \ref{prop:Bessel-Kloost}.
\end{proof}

We now state an apparently rather specialised corollary,
which nonetheless turns out to be sufficient to simplify
many of our `shallow' computations (see
\S\ref{sec:shallow-spun} and \S\ref{sec:shallow-ram}).

\begin{cor}
\label{cor:Bessel-twist}
If \(m \ge 2\) and \(\ord(u) = \ord(v)\),
then
\(\Bessel_{\nu^\alpha\chi}(u, v)\) is independent of
\(\alpha \in \C\);
in particular,
\[
\Bessel_\chi^\epsilon(u, v) = \Bessel_\chi(u, v)
\qandq
\Bessel_\chi^\varpi(u, v)
= \Bessel_{\chi\sgn_\epsilon}^\varpi(u, v).
\]
If \(m \ge 2\) and \(\ord(u) = \ord(v) + 2\), then
\(\Bessel_{\nu^\alpha\chi}(u, v)
= q^\alpha\Bessel_\chi(u, v)\);
in particular,
\[
\Bessel_\chi^\epsilon(u, v) = 0
\qandq
\Bessel_\chi^\varpi(u, v)
= -\Bessel_{\chi\sgn_\epsilon}^\varpi(u, v).
\]
\end{cor}

\begin{proof}
Suppose that \(m > 2\).
If \(u v \not\in (\field\mult)^2\), then
\(\Bessel_{\nu^\alpha\chi}(u, v) = 0\)
for all \(\alpha \in \C\).
If \(u v = w^2\), then the only dependence on \(\alpha\) in
Proposition \ref{prop:Bessel-shallow} is via the factor
\(\chi(u\inv w)\).
If \(\ord(u) = \ord(v)\), then also \(\ord(w) = \ord(u)\),
so \(\nu^\alpha(u\inv w) = 1\).
If \(\ord(u) = \ord(v) + 2\), then \(\ord(w) = \ord(u) - 1\),
so \(\nu^\alpha(u\inv w) = q^\alpha\).

Now suppose that \(m = 2\), i.e., that \(\ord(u v) = -2\).
Since \(\ol{\nu^\alpha\chi} = \ol\chi\), the only dependence
on \(\alpha\) in Proposition \ref{prop:Bessel-Kloost} is via the factor
\(\chi(u\varpi)\inv\).  If
\(\ord(u) = \ord(v)\), then \(\ord(u) = -1\), so
\(\nu^\alpha(u\varpi) = 1\).
If \(\ord(u) = \ord(v) + 2\), then \(\ord(u) = 0\), so
\(\nu^\alpha(u\varpi) = q^{-\alpha}\).
\end{proof}

\section{A mock-Fourier transform}
\label{sec:mock-mu}

We begin by introducing a function \(M^G_{X^*}\) specified
by an integral formula (see Definition \ref{defn:mock-mu})
reminiscent of the usual one
for (the function representing) \(\hat\mu^G_{X^*}\)
(see \cite{adler-debacker:mk-theory}*{Theorem A.1.2}).
We will eventually show
(see Proposition \ref{prop:orbital-integral-integral}) that
it is actually \emph{equal} to \(\hat\mu^G_{X^*}\), but
first we spend some time computing it.

In the notation of Definition \ref{defn:tori},
we have
\begin{equation}
\label{eq:funny-trace}
\Tr g\dotm\sqrt\theta\dotm g\inv\dotm\sqrt{\theta'}
= \Norm_\theta(\alpha)\dotm\theta' + \Norm_\theta(\gamma),
\end{equation}
where \(g = \begin{smallpmatrix}
a & b \\
c & d
\end{smallpmatrix}\), \(\alpha = a + b\sqrt\theta\), and
\(\gamma = c + d\sqrt\theta\).
Since \(1 = a d - b c = \Im_\theta(\ol\alpha\dotm\gamma)\),
we have that
\(\gamma = \smash{\ol\alpha}\inv\dotm(t + \sqrt\theta)\)
for some \(t \in \field\); specifically,
\(t = \Re_\theta(\ol\alpha\dotm\gamma) = a c - b d\theta\).
This calculation motivates the definition of the following
map.

\begin{defn}
\label{defn:sl2-as-fields}
We define
\(\varphi_\theta : G \to \field_\theta\mult \times \field\)
by
\[
\varphi_\theta(g) = (a + b\sqrt\theta, a c - b d\theta)
\]
for \(g = \begin{smallpmatrix}
a & b \\
c & d
\end{smallpmatrix} \in G\).
\end{defn}

Note that \(\varphi_\theta\) is a bi-analytic map
(of \(\field\)-manifolds), with inverse
\[
(\alpha, t) \mapsto \begin{pmatrix}
\Re_\theta(\alpha) &
	\Im_\theta(\alpha) \\
\Norm_\theta(\alpha)\inv\bigl(
	t\dotm\Re_\theta(\alpha) + \theta\dotm\Im_\theta(\alpha)
\bigr) &
\Norm_\theta(\alpha)\inv\bigl(
	\Re_\theta(\alpha) + t\dotm\Im_\theta(\alpha)
\bigr)
\end{pmatrix}.
\]
It is \emph{not} an isomorphism, but its restrictions to
\(T_\theta\),
\(A\),
and
\(\set{\begin{smallpmatrix}
1 & 0 \\
b & 1
\end{smallpmatrix}}{b \in \field}\)
are isomorphisms onto
\(C_\theta \times \sset0\),
\(\field\mult \times \sset0\),
and
\(\sset1 \times \field\),
respectively.  In fact, the next lemma says a bit more.

\begin{lemma}
\label{lem:torus-acts}
If \(g \in G\) satisfies \(\varphi(g) = (\alpha, t)\),
and
\begin{itemize}
\item
\(h \in T_\theta\) is identified with \(\eta \in C_\theta\),
\item
\(a = \begin{smallpmatrix}
\lambda & 0           \\
0       & \lambda\inv
\end{smallpmatrix}\)
(with \(\lambda \in \field\mult\)),
and
\item
\(\ol u = \begin{smallpmatrix}
1 & 0 \\
b & 1
\end{smallpmatrix}\)
(with \(b \in \field\)),
\end{itemize}
then
\begin{align*}
\varphi_\theta(g h)     & {}= (\alpha\eta, t),                     \\
\varphi_\theta(a g)     & {}= (\lambda\alpha, t),                  \\
\intertext{and}
\varphi_\theta(\ol u g) & {}= (\alpha, t + \Norm_\theta(\alpha)b).
\end{align*}
\end{lemma}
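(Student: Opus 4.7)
The plan is to verify each of the three identities by a direct matrix computation, writing $g = \begin{smallpmatrix} a_1 & a_2 \\ c_1 & c_2 \end{smallpmatrix}$ so that $\alpha = a_1 + a_2\sqrt\theta$ and $t = a_1 c_1 - a_2 c_2 \theta$ by Definition \ref{defn:sl2-as-fields}.

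The cases of $a g$ and $\bar u g$ are essentially mechanical. For $a g$, the rows of $g$ get scaled by $\lambda$ and $\lambda\inv$ respectively, so the first component of $\varphi_\theta(a g)$ picks up the factor $\lambda$ (giving $\lambda\alpha$) while the $\lambda$ and $\lambda\inv$ cancel in the second component (leaving $t$). For $\bar u g$, the first row of $g$ is unchanged, so the first component of $\varphi_\theta$ is unaffected; and multiplying out gives second component
\[
a_1(b a_1 + c_1) - a_2(b a_2 + c_2)\theta
= b(a_1^2 - a_2^2\theta) + (a_1 c_1 - a_2 c_2\theta)
= b\Norm_\theta(\alpha) + t,
\]
as required.

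The case $g h$ requires the slightly more substantial computation. Write $h = \begin{smallpmatrix} e & f \\ f\theta & e \end{smallpmatrix}$ with $e^2 - f^2\theta = 1$, so that $\eta = e + f\sqrt\theta$ corresponds to $h$ under the identification $T_\theta \cong C_\theta$. The first component of $\varphi_\theta(g h)$ is
\[
(a_1 e + a_2 f\theta) + (a_1 f + a_2 e)\sqrt\theta
= (a_1 + a_2\sqrt\theta)(e + f\sqrt\theta) = \alpha\eta,
\]
which is just the fact that the embedding $\bT_\theta \hookrightarrow \bG$ realises the multiplication in \(\field_\theta\). For the second component, expanding $(a_1 e + a_2 f\theta)(c_1 e + c_2 f\theta) - (a_1 f + a_2 e)(c_1 f + c_2 e)\theta$ and collecting terms yields $(a_1 c_1 - a_2 c_2\theta)(e^2 - f^2\theta) = t$ after the cross-terms in $ef\theta$ cancel, using the torus relation $\Norm_\theta(\eta) = 1$.

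The only mild subtlety is bookkeeping in the $g h$ computation, namely verifying that the cross-terms $a_1 c_2\, ef\theta$ and $a_2 c_1\, ef\theta$ cancel between the two products and that the remaining terms factor as advertised; no new ideas beyond unwinding the definitions are required, so no step presents a real obstacle.
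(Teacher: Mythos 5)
Your computation is correct and is precisely the ``straightforward computation'' that the paper's proof leaves to the reader; all three identities check out, including the cancellation of the cross-terms and the use of $\Norm_\theta(\eta)=1$ in the $gh$ case.
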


\begin{proof}
This is a straightforward computation.
\end{proof}

Now we are in a position to define our
`mock orbital integral'.
Again, Proposition \ref{prop:orbital-integral-integral}
will eventually show that it is actually equal to
the function in which we are interested.

\begin{defn}
\label{defn:mock-mu}
For \(\alpha \in \field_\theta\mult\) and \(t \in \field\),
put
\[
\indexmem{\langle X^*, Y\rangle_{\alpha, t}}
\ldef \beta s\bigl(
	\Norm_\theta(\alpha)\dotm\theta' +
	\Norm_\theta(\alpha)\inv\dotm\theta -
	\Norm_\theta(\alpha)\inv\dotm t^2
\bigr).
\]
Notice that the dependence on \(\alpha\) is
only via \(\Norm_\theta(\alpha)\).  Thus, we may
define
\[
\indexmem{M^G_{X^*}(Y)}
\ldef \Pint_{\field_\theta\mult/C_\theta} \Pint_\field
	\Phi(\langle X^*, Y\rangle_{\alpha, t})
\textup dt\,\textup d\mult\dot\alpha,
\]
where
\begin{align*}
\Pint_\field f(x)\textup dt
& {}\ldef \sum_{n \in \Z}
	\int_{\ord(x) = n} f(x)\textup dt \\
\Pint_{\field\mult} f(x)\textup d\mult x
& {}\ldef \sum_{n \in \Z}
	\int_{\ord(x) = n} f(x)\textup d\mult x \\
\intertext{and}
\Pint_{\field_\theta\mult/C_\theta}
	(f \circ \Norm_\theta)(\alpha)
\textup d\mult\dot\alpha
& {}\ldef \Pint_{\field\mult}
	[\Norm_\theta(\field\mult)](x)f(x)
\textup d\mult x
\end{align*}
(for those \(f \in C^\infty(\field)\) for which the sum
converges)
are `principal-value' integrals, as in
\cite{sally-taibleson:special}*{p.~282}.
Here,
\(\textup dt\) and \(\textup d\mult x\)
are the measures of Definition \ref{defn:k-Haar},
and
\([S]\) denotes the characteristic function of \(S\).
\end{defn}

By \eqref{eq:funny-trace}
(and Notations \ref{notn:X*} and \ref{notn:Y}), we have that
\begin{equation}
\label{eq:hilarious-trace}
\langle X^*, Y\rangle_{\alpha, t}
= \langle\Ad^*(g)X^*, Y\rangle
\quad\text{when \(\varphi_\theta(g) = (\alpha, t)\),}
\end{equation}
where the pairing \(\langle\cdot, \cdot\rangle\) on the
right is the usual pairing between \(\gg^*\) and \(\gg\).

\begin{notn}
\label{notn:u-v-and-m}
\(\indexmem u = \varpi^{-(\scdepth + 1)}s\theta'\),
\(\indexmem v = \varpi^{-(\scdepth + 1)}s\theta\),
and
\(\indexmem m = -\ord(u v)\).
\end{notn}

This is a special case of
Notation \ref{notn:generic-u-v-and-m}.
These particular values of \(u\) and \(v\) will be fixed for
the remainder of the paper.
It follows that
\begin{equation}
\label{eq:uv}
u v = (\varpi^{-(\scdepth + 1)}s)^2\dotm\theta\theta',
\end{equation}
so
\begin{equation}
\label{eq:uv-is-square}
u v \in (\field\mult)^2 \Leftrightarrow
	\theta\theta' \in (\field\mult)^2;
\end{equation}
and we use Lemma \ref{lem:Y-facts} to compute
\begin{gather}
\label{eq:ord-u}
\ord(u) = -(\scdepth + 1) + \ord(s\theta')
= -\bigl(\scdepth + 1 + \tfrac1 2\ord(\theta')\bigr) +
	\depth(Y) \\
\intertext{and}
\label{eq:m-and-depth-Y}
m = 2(\scdepth + 1) - \ord(s^2\theta') - \ord(\theta)
= 2(\scdepth + 1 - \depth(Y)) - \ord(\theta).
\end{gather}

\subsection{Mock-Fourier transforms and Bessel functions}

We can now evaluate the integral occurring in
Definition \ref{defn:mock-mu} in terms of Bessel functions%
---or, rather, the sums \(\Bessel_\chi^\theta\) of
Definition \ref{defn:Bessel}.

\begin{prop}
\label{prop:second-orbital}
\begin{align*}
M^G_{X^*}(Y)
={} &
\tfrac1 2\abs s^{-1/2}q^{-(\scdepth + 1)/2}\times{} \\
& \quad\Bigl(
	\bigl(
		\Bessel_{\nu^{1/2}}^\theta(u, v) +
		\gammaun(s)
			\Bessel_{\nu^{1/2}\sgn_\epsilon}^\theta(u, v)
	\bigr) +{} \\
& \quad\qquad
	\gammaram(s)\bigl(
		\Bessel_{\nu^{1/2}\sgn_\varpi}^\theta(u, v) -
		\gammaun(s)
			\Bessel_{\nu^{1/2}\sgn_{\epsilon\varpi}}^\theta(u, v)
	\bigr)
\Bigr),
\end{align*}
where \(\Bessel_\chi^\theta\) is as in
Definition \ref{defn:Bessel}, and
\(\gammaun(s)\) and \(\gammaram(s)\) are as in
Definition \ref{defn:Wald-i}.
\end{prop}

\begin{proof}
Recall the notation \(\scPhi = \Phi_\beta\) from
Notation \ref{notn:depth-and-scdepth}.
By Definition \ref{defn:mock-mu},
\begin{equation}
\tag{$*$}
\label{eq:orbital-as-Pint}
\begin{aligned}
M^G_{X^*}&(Y) \\
& {}= \Pint_{\field_\theta\mult/C_\theta}
	\scPhi_s\bigl(
		\Norm_\theta(\alpha)\dotm\theta'
		+ \Norm_\theta(\alpha)\inv\dotm\theta
	\bigr)\dotm
	\Pint_\field
		\scPhi(-s \Norm_\theta(\alpha)\inv t^2)
	\textup dt\,
\textup d\mult\dot\alpha \\
& {}= q^{-(\scdepth + 1)/2}\Pint_{\field\mult}
	[\Norm_\theta(\field_\theta\mult)](x)
	\bessel(\theta', \theta; x)
	\mc H(\scPhi, -s x\inv)
\textup d\mult x,
\end{aligned}
\end{equation}
where
\begin{itemize}
\item
\(\bessel(\theta', \theta; x)
\ldef \scPhi_s(\theta'x + \theta x\inv)
= \Phi\bigl(\beta s(\theta'x + \theta x\inv)\bigr)\)
for \(x \in \field\mult\);
and
\item
\(\mc H(\scPhi, b)
= {\displaystyle\Pint_\field}
	\scPhi(b t^2)
\textup d_{\scPhi}t\)
for \(b \in \field\mult\)
is as in \cite{shalika:thesis}*{p.~6}.
\end{itemize}
In particular, \(\textup d_{\scPhi}t\) is the
\(\scPhi\)-self-dual Haar measure on \(\field\); by
\cite{shalika:thesis}*{p.~5}, it satisfies
\(\textup dt = q^{-(\scdepth + 1)/2}\textup d_{\scPhi}t\).
This is the reason for the appearance of
\(q^{-(\scdepth + 1)/2}\) on the last line of the
computation.

The significance of \(\bessel\) is that integrating it
against a (multiplicative) character \(\chi\) of
\(\field\mult\) corresponds to evaluating a Bessel function
of order \(\chi\), in the sense of Definition
\ref{defn:Bessel}.
To be precise, note that our character \(\scPhi\) has depth
\(\scdepth\), not \(-1\), so that we must work instead with
\(\scPhi_{\varpi^{\scdepth + 1}}\).
Then
\[
\bessel(\theta', \theta; x) =
\scPhi_{\varpi^{\scdepth + 1}}\bigl(
	(\varpi^{-(\scdepth + 1)}s\theta')x +
	(\varpi^{-(\scdepth + 1)}s\theta)x\inv
\bigr)
= \scPhi_{\varpi^{\scdepth + 1}}(u x + v x\inv),
\]
where \((u, v)\) is as in Notation \ref{notn:u-v-and-m}, so
\begin{equation}
\tag{$\dag$}
\label{eq:Bessel}
\Pint_{\field\mult}
	\bessel(\theta', \theta; x)\chi(x)
\textup d\mult x
= \Bessel_\chi(u, v)
\end{equation}
for
\(\chi \in \widehat{\field\mult}\).

Now note that \(\tfrac1 2(1 + \sgn_\theta)\) is the
characteristic function of \(\Norm_\theta(k_\theta\mult)\), so
we may re-write \eqref{eq:orbital-as-Pint} as
\begin{equation}
\tag{$**$}
\label{eq:first-orbital}
q^{-(\scdepth + 1)/2}
\Pint_{\field\mult}
	\tfrac1 2(1 + \sgn_\theta(x))\dotm
	\bessel(\theta', \theta; x)
	\mc H(\scPhi, -s x\inv)
\textup d\mult x.
\end{equation}

By \cite{shalika:thesis}*{Lemma 1.3.2}
and Lemma \ref{lem:G-facts}, we have
\[
\mc H(\scPhi, b) = \abs b^{-1/2}\begin{cases}
\sgn_\varpi(b)G_\varpi(\scPhi), &
	\text{\(\scdepth - \ord(b)\) even} \\
1, &
	\text{\(\scdepth - \ord(b)\) odd.}
\end{cases}
\]

We find it useful to offer a description of
\(\mc H(\scPhi, b)\) without explicit use of cases.
As above, we note that
\(\tfrac1 2(1 + (-1)^n\sgn_\epsilon)\) is the characteristic
function of
\(\set{b \in \field\mult}{\ord(b) \equiv n \pmod2}\),
so that we may re-write
\[
\mc H(\scPhi, b)
= \tfrac1 2(
	1 + (-1)^\scdepth\sgn_\epsilon(b)
)\sgn_\varpi(b)G_\varpi(\scPhi) +
\tfrac1 2(
	1 - (-1)^\scdepth\sgn_\epsilon(b)
).
\]
Plugging this into \eqref{eq:first-orbital},
with \(b = -s t\inv\), gives
\begin{align*}
M^G_{X^*}(Y)
= \tfrac1 2\abs s^{-1/2}q^{-(\scdepth + 1)/2}
\Pint_{\field\mult}
	& \tfrac1 2(1 + \sgn_\theta(x))\times{} \\
	& \bigl(
		(1 - \gammaun(s)\sgn_\epsilon(x))
			\gammaram(s)
			\sgn_\varpi(x) +{} \\
	& \qquad (1 + \gammaun(s)\sgn_\epsilon(x))
	\bigr)\times{} \\
	& \abs x^{1/2}
	\bessel(\theta', \theta; x)
\textup d\mult x.
\end{align*}
Expanding the product and applying \eqref{eq:Bessel}
gives the desired formula.
\end{proof}

\subsection{`Deep' Bessel functions}

By Proposition \ref{prop:second-orbital},
one approach to computing \(M^G_{X^*}(Y)\)
(hence \(\hat\mu^G_{X^*}(Y)\), by
Proposition \ref{prop:orbital-integral-integral})
is to evaluate many Bessel functions, and this is
exactly what we do.
As Theorem
\ref{thm:sally-taibleson:special:bessel} makes
clear, the behaviour of Bessel functions is
more predictable when \(m < 2\) than
otherwise.
We introduce a convenient, but temporary, shorthand for
referring to Bessel functions in this
range; we will only use it in this section,
and \S\S\ref{sec:close-spun} and \ref{sec:close-ram}.

\begin{notn}
\label{notn:Bessel-abbrev}
We define
\[
[A; B]_{\theta, \scdepth}(\theta')
\ldef \abs\theta^{1/2}A +
	q^{-(\scdepth + 1)}\abs{D_\gg(Y)}^{-1/2}B(\theta').
\]
We will usually suppress the subscript on \([A; B]\),
and will sometimes write
\[
[
	A; B(1), B(\epsilon), B(\varpi), B(\epsilon\varpi)
](\theta')
\]
for the same quantity.
\end{notn}

\begin{prop}
\label{prop:Bessels}
With the notation of
Notations
\ref{notn:depth-and-scdepth},
\ref{notn:Y},
and
\ref{notn:u-v-and-m},
and Definition \ref{defn:Wald-i},
if \(m < 2\), then
\begin{multline*}
\abs s^{-1/2}q^{-(\scdepth + 1)/2}
\Bessel_{\nu^{1/2}\chi}(u, v) \\
= \begin{cases}
\bigl[Q_3(q^{-1/2}); 1\bigr](\theta'),
	& \chi = 1 \\
\gammaun(s)
\bigl[
	\sgn_\epsilon(\theta)Q_3(-q^{-1/2});
	\sgn_\epsilon
\bigr](\theta'),
	& \chi = \sgn_\epsilon \\
\gammaram(s)\inv
\bigl[
	\sgn_\varpi(\theta)q\inv;
	\sgn_\varpi
\bigr](\theta'),
	& \chi = \sgn_\varpi \\
-\gammaun(s)\gammaram(s)\inv
\bigl[
	\sgn_{\epsilon\varpi}(\theta)q\inv;
	\sgn_{\epsilon\varpi}
\bigr](\theta'),
	& \chi = \sgn_{\epsilon\varpi},
\end{cases}
\end{multline*}
where
\[
Q_3(T) = -T(T^2 + T + 1).
\]
\end{prop}

The unexpected factor \(\abs s^{-1/2}q^{-(\scdepth + 1)/2}\)
above crops up repeatedly in calculations (see, for example,
Proposition \ref{prop:second-orbital}), so it simplifies
matters to include it in this calculation.

\begin{proof}
By Theorem
\ref{thm:sally-taibleson:special:bessel}
and Lemma \ref{lem:Y-facts},
\begin{align*}
\Bessel_{\nu^{1/2}\chi}(u, v)
={} &
(\nu^{1/2}\chi)(v)\Gamma(\nu^{-1/2}\chi) +
(\nu^{-1/2}\chi)(u)\Gamma(\nu^{1/2}\chi) \\
={} &
(\nu^{1/2}\chi)(v\theta\inv)\times{} \\
&\qquad\bigl(
	(\nu^{1/2}\chi)(\theta)
		\Gamma(\nu^{-1/2}\chi) +
	(\nu^{-1/2}\chi)(u v\theta\inv)
		\Gamma(\nu^{1/2}\chi)
\bigr) \\
={} &
\abs s^{1/2}q^{(\scdepth + 1)/2}
\chi(\varpi^{\scdepth + 1}s)\bigl[
	\chi(\theta)\Gamma(\nu^{-1/2}\chi);
	\Gamma(\nu^{1/2}\chi)\dotm\chi
\bigr](\theta')
\end{align*}
whenever \(\chi^2 = 1\).

In particular, upon using
\cite{sally-taibleson:special}*{%
	Theorem 3.1(i, ii)%
} to compute the \(\Gamma\)-factors, we see that
\(\abs s^{-1/2}q^{-(\scdepth + 1)/2}
\Bessel_{\nu^{1/2}\chi}(u, v)\)
is given by
\begin{equation}
\tag{$*$}
\label{eq:complicated-Bessels}
\begin{cases}
\bigl[Q_3(q^{-1/2}); 1\bigr](\theta'),
	& \chi = 1 \\
\gammaun(s)
\bigl[
	\sgn_\epsilon(\theta)Q_3(-q^{-1/2});
	\sgn_\epsilon
\bigr](\theta'),
	& \chi = \sgn_\epsilon \\
\sgn_\varpi(\varpi^{\scdepth + 1}s)
\Gamma(\nu^{1/2}\sgn_\varpi)
\bigl[
	\sgn_\varpi(\theta)q\inv;
	\sgn_\varpi
\bigr](\theta'),
	& \chi = \sgn_\varpi \\
\gammaun(s)
\sgn_\varpi(\varpi^{\scdepth + 1}s)
\Gamma(\nu^{1/2}\sgn_{\epsilon\varpi})
\bigl[
	\sgn_{\epsilon\varpi}(\theta)q\inv;
	\sgn_{\epsilon\varpi}
\bigr](\theta'),
	& \chi = \sgn_{\epsilon\varpi}.
\end{cases}
\end{equation}
By Theorem 3.1(ii) \loccit again,
and the fact that
\(\sgn_{\epsilon\varpi} = \nu^{i\pi/\ln(q)}\sgn_\varpi\),
we have that
\(\Gamma(\nu^{1/2}\sgn_{\epsilon\varpi})
= -\Gamma(\nu^{1/2}\sgn_\varpi)\);
and,
by Lemma \ref{lem:Gamma-varpi},
Definition \ref{defn:Wald-i},
and \eqref{eq:gamma-square},
\begin{align*}
& \sgn_\varpi(\varpi^{\scdepth + 1}s)
\Gamma(\nu^{1/2}\sgn_\varpi) \\
& \qquad= \sgn_\varpi(-1)^{\scdepth + 1}
\sgn_\varpi(s)\dotm
\sgn_\varpi(-1)^{\scdepth + 1}
G_\varpi(\scPhi) \\
& \qquad= \sgn_\varpi(s)G_\varpi(\scPhi) \\
& \qquad= \gammaram(s)\inv.
\end{align*}
This shows that \eqref{eq:complicated-Bessels} reduces to
the table in the statement.
\end{proof}

\section{Split and unramified orbital integrals}
\label{sec:orbital-spun}

Throughout this section, we have
\begin{equation}
\label{eq:theta-and-r-spun}
\theta = 1\text{ or }\theta = \epsilon\text,
	\quad\text{so that}\quad
\scdepth = r.
\end{equation}

In the split case,
\(\Bessel_\chi^1 = \Bessel_\chi\)
for \(\chi \in \widehat{\field\mult}\),
so Proposition \ref{prop:second-orbital} gives
\begin{equation}
\label{eq:third-orbital-split}
\begin{aligned}
M^G_{X^*}(Y)
={} &
\tfrac1 2\abs s^{-1/2}q^{-(r + 1)/2}\times{} \\
&\quad\Bigl(
	\bigl(
		\Bessel_{\nu^{1/2}}(u, v) +
		\gammaun(s)\Bessel_{\nu^{1/2}\sgn_\epsilon}(u, v)
	\bigr) +{} \\
&\quad\qquad
	\gammaram(s)\bigl(
		\Bessel_{\nu^{1/2}\sgn_\varpi}(u, v) -
		\gammaun(s)\Bessel_{\nu^{1/2}\sgn_{\epsilon\varpi}}(u, v)
	\bigr)
\Bigr),
\end{aligned}
\end{equation}
In the unramified case,
\(\Bessel_\chi^\epsilon = \Bessel_{\chi\sgn_\epsilon}^\epsilon\)
for \(\chi \in \widehat{\field\mult}\),
so Proposition \ref{prop:second-orbital} gives
\begin{equation}
\label{eq:third-orbital-un}
\begin{aligned}
M^G_{X^*}(Y)
={} &
\tfrac 1 2\abs s^{-1/2}q^{-(r + 1)/2}\times{} \\
&\quad\bigl(
	(1 + \gammaun(s))
		\Bessel_{\nu^{1/2}}^\epsilon(u, v) +
	\gammaram(s)(1 - \gammaun(s))
		\Bessel_{\nu^{1/2}\sgn_\varpi}^\epsilon(u, v)
\bigr).
\end{aligned}
\end{equation}

By \eqref{eq:sgn-and-G} and
\eqref{eq:gamma-square},
\begin{equation}
\label{eq:sgn-and-G-spun}
\sgn_\varpi(v)G_\varpi(\scPhi_{\varpi^{r + 1}})
= \begin{cases}
\sgn_\varpi(-1)\gammaram(s) = \gammaram(s)\inv, &
	\theta = 1 \\
\sgn_\varpi(-\epsilon)\gammaram(s)
	= -\gammaram(s)\inv, &
	\theta = \epsilon.
\end{cases}
\end{equation}

\subsection{Far from zero}
\label{sec:shallow-spun}

The results of this section are special cases for split and
unramified orbital integrals of results of Waldspurger
\cite{waldspurger:loc-trace-form}*{Proposition VIII.1}.
We shall prove analogues of these results for ramified
orbital integrals in \S\ref{sec:shallow-ram}.

The qualitative behaviour of unramified orbital integrals
does not change as we pass from elements of depth less than
\(r\) to those of depth exactly \(r\); this is unlike the
situation for ramified orbital integrals.  See
\S\ref{sec:bad-ram}.

\begin{thm}
\label{thm:vanish-spun}
If
\(\depth(X^*) + \depth(Y) \le 0\)
and
\(X^*\) is split or unramified,
then
\(M^G_{X^*}(Y) = 0\) unless
\(X^*\) and \(Y\) lie in \(G\)-conjugate tori.
\end{thm}

\begin{proof}
Recall that \(\theta = 1\) if
\(X^*\) is split,
and
\(\theta = \epsilon\) if
\(X^*\) is unramified.

By \eqref{eq:m-and-depth-Y}, \(m \ge 2\);
in fact,
\(m > 2\) (indeed, \(m\) is odd)
unless \(\ord(\theta')\) is even.

If \(m > 2\), then Proposition \ref{prop:Bessel-shallow}
and \eqref{eq:uv-is-square}
show that \(M^G_{X^*}(Y) = 0\)
unless \(\theta\theta' \in (\field\mult)^2\).
By \S\ref{sec:tori}, it therefore suffices
to consider the cases when
\(\theta = \epsilon\) and
\(\theta' = \varpi^2\epsilon\),
i.e., \(X^*\) and \(Y\) lie in stably, but not rationally,
conjugate tori;
and when \(m = 2\) and
\(\sset{\theta, \theta'}
= \sset{1, \epsilon}\),
i.e., one of \(X^*\) or \(Y\) is split, and the other
unramified.

Suppose first that
\(\theta = \epsilon\) and
\(\theta' = \varpi^2\epsilon\), so that
\(\ord(u) = \ord(v) + 2\).
By Corollary \ref{cor:Bessel-twist},
\eqref{eq:third-orbital-un} becomes
\(M^G_{X^*}(Y) = 0\).

Now suppose that
\(\sset{\theta, \theta'} = \sset{1, \epsilon}\)
and \(m = 2\).
By Corollary \ref{cor:Bessel-twist},
since \(\ord(u) = \ord(v)\),
\[
\Bessel_{\nu^{1/2}}(u, v)
= \Bessel_{\nu^{1/2}\sgn_\epsilon}(u, v)
\qandq
\Bessel_{\nu^{1/2}\sgn_\varpi}(u, v)
= \Bessel_{\nu^{1/2}\sgn_{\epsilon\varpi}}(u, v),
\]
so \eqref{eq:third-orbital-split}
agrees with \eqref{eq:third-orbital-un}.
We shall work with
\eqref{eq:third-orbital-un}, since it is
simpler.

By Corollary \ref{cor:Bessel-Kloost} and
\eqref{eq:uv-is-square},
\(\Bessel_{\nu^\alpha\sgn_\varpi}(u, v) = 0\)
for all \(\alpha \in \C\); in particular,
for
\(\alpha = 1/2\)
and
\(\alpha = 1/2 + i\pi/\ln(q)\).
By \eqref{eq:m-and-depth-Y}, \(\ord(s) = r\), so,
by Definition \ref{defn:Wald-i},
\(\gammaun(s) = -1\),
and \eqref{eq:third-orbital-un}
(hence also \eqref{eq:third-orbital-split})
becomes
\[
M^G_{X^*}(Y)
= \tfrac1 2\abs s^{-1/2}
\Bessel_{\nu^{1/2}\sgn_\varpi}^\epsilon(u, v)
= 0.\qedhere
\]
\end{proof}

\begin{thm}
\label{thm:shallow-spun}
If
\(\depth(X^*) + \depth(Y) \le 0\)
and
\(X^*\) and \(Y\) lie in a common split or unramified torus \bT
(with \(T = \bT(k)\)),
then
\[
M^G_{X^*}(Y)
= q^{-(r + 1)}\abs{D_\gg(Y)}^{-1/2}
\gamma_\Phi(X^*, Y)
\sum_{\sigma \in W(G, T)}
	\Phi(\langle\Ad^*(\sigma)X^*, Y\rangle),
\]
where
\(\gamma_\Phi(X^*, Y)\) is as in
Definition \ref{defn:Wald-i}.
\end{thm}

\begin{proof}
The hypothesis implies that
\(\theta = \theta'\), so \(u = v\).
By Corollary \ref{cor:Bessel-twist},
\[
\Bessel_{\nu^{1/2}}(u, v)
= \Bessel_{\nu^{1/2}\sgn_\epsilon}(u, v)
\qandq
\Bessel_{\nu^{1/2}\sgn_\varpi}(u, v)
= \Bessel_{\nu^{1/2}\sgn_{\epsilon\varpi}}(u, v),
\]
so \eqref{eq:third-orbital-split} agrees
with \eqref{eq:third-orbital-un}.
We shall work with
\eqref{eq:third-orbital-un}, since it is
simpler.

By Remark \ref{rem:Weyl},
\(W(G, T_\theta)
= \sset{1, \sigma_\theta}\),
where \(\Ad^*(\sigma_\theta)X^* = -X^*\).

We may take the square root \(w\) of \(u v\) in
Proposition \ref{prop:Bessel-shallow}
to be just \(u\).
By \eqref{eq:m-and-depth-Y},
\begin{equation}
\tag{$*$}
\label{eq:m-shallow-spun}
q^{-m/4} = q^{-(r + 1)/2}q^{\ord(s)/2}
= q^{-(r + 1)/2}\abs s^{-1/2}.
\end{equation}
By Notations \ref{notn:depth-and-scdepth}
and \ref{notn:u-v-and-m},
\begin{equation}
\tag{$**$}
\label{eq:new-Phi-at-w-shallow-spun}
\scPhi_{\varpi^{r + 1}}(\pm2w)
= \scPhi(\pm2s\theta)
= \Phi(\pm2\beta s\theta)
= \Phi(\pm\langle X^*, Y\rangle)
\end{equation}
(the last equality following, for example, from
\eqref{eq:hilarious-trace}).

Suppose that
\(\ord(s) \not\equiv r \pmod2\),
so that \(\gammaun(s) = 1\) and \(\gamma_\Phi(X^*, Y) = 1\).
By Corollary \ref{cor:Bessel-twist},
since \(u = v\),
\eqref{eq:third-orbital-un}
(hence also \eqref{eq:third-orbital-split})
becomes
\begin{equation}
\tag{$\dag$}
\label{eq:orbital-not-r-shallow-spun}
\begin{aligned}
M^G_{X^*}(Y)
& {}= \tfrac1 2\abs s^{-1/2}q^{-(r + 1)/2}\dotm
2\dotm\Bessel_{\nu^{1/2}}^\epsilon(u, v) \\
& {}= \abs s^{-1/2}q^{-(r + 1)/2}
\Bessel_{\nu^{1/2}}(u, v).
\end{aligned}
\end{equation}
Since
\(m > 2\) and \(4 \mid m\)
by \eqref{eq:m-and-depth-Y},
combining
Proposition \ref{prop:Bessel-shallow},
\eqref{eq:m-shallow-spun},
and
\eqref{eq:new-Phi-at-w-shallow-spun}
gives
\begin{equation}
\tag{$\dag\dag$}
\label{eq:Bessel-1-shallow-spun}
\begin{aligned}
\Bessel_{\nu^{1/2}}(u, v)
& {}= q^{-(r + 1)/2}\abs s^{-1/2}\bigl(
	\Phi(\langle X^*, Y\rangle) +
	\Phi(-\langle X^*, Y\rangle)
\bigr) \\
& {}= q^{-(r + 1)/2}\abs s^{-1/2}
\sum_{\sigma \in W(G, T_\theta)}
	\Phi(\langle\Ad^*(\sigma)X^*, Y\rangle) \\
& {}= q^{-(r + 1)/2}\abs{s\theta'}^{-1/2}
\gamma_\Phi(X^*, Y)
\sum_{\sigma \in W(G, T_\theta)}
	\Phi(\langle\Ad^*(\sigma)X^*, Y\rangle).
\end{aligned}
\end{equation}
The result (in this case)
now follows from Lemma \ref{lem:Y-facts}
by combining
\eqref{eq:orbital-not-r-shallow-spun}
and
\eqref{eq:Bessel-1-shallow-spun}.

Suppose now that \(\ord(s) \equiv r \pmod2\),
so that \(\gammaun(s) = -1\) and
\[
\gamma_\Phi(X^*, Y)
= \begin{cases}
1,  & \theta = 1         \\
-1, & \theta = \epsilon.
\end{cases}
\]
Again by Corollary \ref{cor:Bessel-twist},
since \(u = v\),
\eqref{eq:third-orbital-un}
(hence also \eqref{eq:third-orbital-split})
becomes
(as in
\eqref{eq:orbital-not-r-shallow-spun})
\begin{equation}
\tag{$\dag'$}
\label{eq:orbital-r-shallow-spun}
M^G_{X^*}(Y)
= \abs s^{-1/2}q^{-(r + 1)/2}\gammaram(s)
\Bessel_{\nu^{1/2}\sgn_\varpi}(u, v).
\end{equation}
Since
\(4 \nmid m\) by \eqref{eq:m-and-depth-Y},
if \(m > 2\), then combining
Proposition \ref{prop:Bessel-shallow},
\eqref{eq:m-shallow-spun},
\eqref{eq:sgn-and-G-spun},
and
\eqref{eq:new-Phi-at-w-shallow-spun}
gives
(as in \eqref{eq:Bessel-1-shallow-spun})
\begin{equation}
\tag{$\dag\dag'_{< r}$}
\label{eq:Bessel-varpi-shallow-spun}
\begin{aligned}
& \Bessel_{\nu^{1/2}\sgn_\varpi}(u, v) \\
& \qquad= q^{-(r + 1)/2}\abs{s\theta'}^{-1/2}
\gammaram(s)\inv\gamma_\Phi(X^*, Y)
\sum_{\sigma \in W(G, T_\theta)}
	\Phi(\langle\Ad^*(\sigma)X^*, Y\rangle).
\end{aligned}
\end{equation}
If \(m = 2\), then, by
Lemma \ref{lem:Y-facts},
\eqref{eq:ord-u},
and
\eqref{eq:m-and-depth-Y},
\(\abs s = q^{-r}\)
and
\(\ord(u) = -1\).
Thus, combining
Corollary \ref{cor:Bessel-Kloost},
\eqref{eq:sgn-and-G-spun},
and
\eqref{eq:new-Phi-at-w-shallow-spun}
gives
\begin{equation}
\tag{$\dag\dag'_{= r}$}
\label{eq:Bessel-varpi-bad-spun}
\begin{aligned}
&\Bessel_{\nu^{1/2}\sgn_\varpi}(u, v) \\
& \qquad= q^{-1/2}
\gammaram(s)\inv\gamma_\Phi(X^*, Y)
\sum_{\sigma \in W(G, T_\theta)}
	\Phi(\langle\Ad^*(\sigma)X^*, Y\rangle) \\
& \qquad= q^{-(r + 1)/2}\abs{s\theta'}^{-1/2}
\gammaram(s)\inv\gamma_\Phi(X^*, Y)
\sum_{\sigma \in W(G, T_\theta)}
	\Phi(\langle\Ad^*(\sigma)X^*, Y\rangle).
\end{aligned}
\end{equation}
The result (in this case) now follows
by combining
\eqref{eq:orbital-r-shallow-spun},
	\eqref{eq:Bessel-varpi-shallow-spun}
	\emph{or}
	\eqref{eq:Bessel-varpi-bad-spun},
and
Lemma \ref{lem:Y-facts}
\end{proof}

\subsection{Close to zero}
\label{sec:close-spun}

\begin{thm}
\label{thm:close-spun}
If
\(\depth(X^*) + \depth(Y) > 0\),
and
\(X^*\) is split or unramified,
then let
\(\gamma_\Phi(X^*, Y)\) and \(c_0(X^*)\)
be as in Definitions \ref{defn:Wald-i} and \ref{defn:const},
respectively.
Then
%
\[
M^G_{X^*}(Y)
= c_0(X^*) +
\frac2{n(X^*)}q^{-(r + 1)}\abs{D_\gg(Y)}^{-1/2}
	\gamma_\Phi(X^*, Y),
\]
where
\[
n(X^*) = \begin{cases}
1, & \text{\(X^*\) split} \\
2, & \text{\(X^*\) elliptic.}
\end{cases}
\]
\end{thm}

\begin{proof}
By \eqref{eq:m-and-depth-Y}, \(m < 2\).

By Proposition \ref{prop:Bessels}, using
Notation \ref{notn:Bessel-abbrev},
\eqref{eq:third-orbital-split} becomes
\[
M^G_{X^*}(Y)
= \tfrac1 2\bigl[
	Q_3(q^{-1/2}) +
	Q_3(q^{-1/2}) -
	q\inv -
	q\inv;
	1 +
	\sgn_\epsilon +
	\sgn_\varpi +
	\sgn_{\epsilon\varpi}
\bigr](\theta').
\]
Since
\begin{equation}
\label{eq:inv-Q3}
Q_3(q^{-1/2}) + Q_3(-q^{-1/2})
= -2T^2\bigr|_{T = q^{-1/2}}
= -2q\inv,
\end{equation}
this simplifies (by the Plancherel formula on
\(\field\mult/(\field\mult)^2\)!)\ to
\[
M^G_{X^*}(Y)
= [-2q\inv; 2, 0, 0, 0].
\]
Similarly, \eqref{eq:third-orbital-un} becomes
\begin{align*}
M^G_{X^*}(Y)
= \tfrac1 2\Bigl(&
\tfrac1 2(1 + \gammaun(s))\underbrace{\bigl[
	Q_3(q^{-1/2}) + \gammaun(s)Q_3(-q^{-1/2});
	1 + \gammaun(s)\sgn_\epsilon
\bigr]}_{\text{(I)}} + \\
&\qquad\tfrac1 2(1 - \gammaun(s))\underbrace{\bigl[
	-(1 - \gammaun(s))q\inv;
	(1 - \gammaun(s)\sgn_\epsilon)\sgn_\varpi
\bigr]}_{\text{(II)}}
\Bigr)(\theta').
\end{align*}
Since \(\gammaun(s) = \pm1\)
(see Definition \ref{defn:Wald-i}),
we may replace \(\gammaun(s)\) by \(1\) in (I)
and by \(-1\) in (II),
then use \eqref{eq:inv-Q3}
and check case-by-case to see that the formula simplifies to
\[
M^G_{X^*}(Y)
= [-q\inv; 1, \gammaun(s), 0, 0](\theta').\qedhere
\]
\end{proof}

\section{Ramified orbital integrals}
\label{sec:orbital-ram}

Throughout this section, we have
\begin{equation}
\theta = \varpi,
	\quad\text{so that}\quad
\scdepth = r + \tfrac1 2 \rdef \indexmem h.
\end{equation}
Then
\(\Bessel_\chi^\varpi = \Bessel_{\chi\sgn_\varpi}^\varpi\)
for \(\chi \in \widehat{\field\mult}\), so
Proposition \ref{prop:second-orbital} gives
\begin{equation}
\label{eq:third-orbital-ram}
M^G_{X^*}(Y)
= \tfrac1 2\abs s^{-1/2}\bigl(
	(1 + \gammaram(s))
		\Bessel_{\nu^{1/2}}^\varpi(u, v) +
	\gammaun(s)(1 - \gammaram(s))
		\Bessel_{\nu^{1/2}\sgn_\epsilon}^\varpi(u, v)
\bigr).
\end{equation}

By \eqref{eq:sgn-and-G},
\begin{equation}
\label{eq:sgn-and-G-ram}
\sgn_\varpi(v)G_\varpi(\scPhi_{\varpi^{h + 1}}) \\
= \sgn_\varpi(-\varpi)\gammaram(s)
= \gammaram(s).
\end{equation}

\subsection{Far from zero}
\label{sec:shallow-ram}

As in \S\ref{sec:shallow-spun},
the results of this section are special cases of
\cite{waldspurger:loc-trace-form}*{Proposition VIII.1}.

\begin{thm}
\label{thm:vanish-ram}
If
\(\depth(X^*) + \depth(Y) < 0\)
and
\(X^*\) is ramified,
then \(M^G_{X^*}(Y) = 0\) unless
\(X^*\) and \(Y\) lie in \(G\)-conjugate tori.
\end{thm}

\begin{proof}
By \eqref{eq:m-and-depth-Y}, \(m > 2\), so
Proposition \ref{prop:Bessel-shallow} and \eqref{eq:uv-is-square}
show that
\(M^G_{X^*}(Y) = 0\)
unless \(\varpi\theta' \in (\field\mult)^2\).
By \S\ref{sec:tori}, it therefore suffices
to consider the case
when \(-1 \in (\resfld\mult)^2\)
(so \(\sgn_\varpi(-1) = 1\))
and \(\theta' = \epsilon^2\varpi\),
i.e., \(X^*\) and \(Y\) lie in stably, but not rationally,
conjugate tori.

By \eqref{eq:uv}, we may take the square root \(w\) of \(u v\)
to be \(w = \varpi^{-h}s\epsilon = \epsilon\inv u\).
Then \(u\inv w = \epsilon\inv\), so
Proposition \ref{prop:Bessel-shallow} shows
(whether or not \(4\) divides \(m\))
that, if \(\chi\) is mildly ramified \emph{and} trivial at
\(-1\), then
\[
\Bessel_{\chi\sgn_\varpi}(u, v)
= \sgn_\varpi(u\inv\varpi)\Bessel_\chi(u, v)
= -\Bessel_\chi(u, v),
\]
hence that
\(\Bessel_\chi^\varpi(u, v) = 0\).
In particular, this equality holds for
\(\chi = \nu^{1/2}\)
and
\(\chi = \nu^{1/2}\sgn_\epsilon\).
It follows from \eqref{eq:third-orbital-ram} that
\(M^G_{X^*}(Y) = 0\).
\end{proof}

\begin{thm}
\label{thm:shallow-ram}
If
\(\depth(X^*) + \depth(Y) < 0\),
and
\(X^*\) and \(Y\) lie in a common ramified torus \bT
(with \(T = \bT(k)\)),
then
\[
M^G_{X^*}(Y)
= q^{-(h + 1)}\abs{D_\gg(Y)}^{-1/2}\gamma_\Phi(X^*, Y)
\sum_{\sigma \in W(G, T)}
	\Phi(\langle\Ad^*(\sigma)(X^*), Y\rangle),
\]
where \(\gamma_\Phi(X^*, Y)\) is as in
Definition \ref{defn:Wald-i}.
\end{thm}

\begin{proof}
Since we have fixed \(\theta = \varpi\),
the hypothesis implies that \(\theta' = \varpi\).
In particular, \(u = v\).
Write \(\sigma_\varpi\) for the non-trivial element of
\(W(\bG, \bT_\varpi)(\field_\varpi)\),
so that \(\Ad^*(\sigma_\varpi)X^* = -X^*\).
Note that it is possible that
\(\sigma_\varpi\) is not \(\field\)-rational.
More precisely, by \S\ref{sec:tori}, we have that
\[
W(G, T_\varpi) = \begin{cases}
\sset{1, \sigma_\varpi}, &
	\sgn_\varpi(-1) = 1    \\
\sset1, &
	\sgn_\varpi(-1) = -1.
\end{cases}
\]

By \eqref{eq:m-and-depth-Y},
\begin{equation}
\tag{$*$}
\label{eq:m-shallow-ram}
q^{-m/4} = q^{-(h - \ord(s))/2} = q^{-h/2}\abs s^{-1/2}.
\end{equation}

By Corollary \ref{cor:Bessel-twist},
since \(u = v\),
\[
\Bessel_{\nu^{1/2}}^\varpi(u, v)
= \Bessel_{\nu^{1/2}\sgn_\epsilon}^\varpi(u, v),
\]
so \eqref{eq:third-orbital-ram} becomes
\begin{equation}
\tag{$\dag$}
\label{eq:orbital-shallow-ram}
M^G_{X^*}(Y)
= \tfrac1 2\abs s^{-1/2}q^{-(h + 1)/2}
\bigl(
	(1 + \gammaram(s)) + \gammaun(s)(1 - \gammaram(s))
\bigr)\Bessel_{\nu^{1/2}}^\varpi(u, v).
\end{equation}
It remains to compute
\(\Bessel_{\nu^{1/2}}^\varpi(u, v)\).

We will use Proposition \ref{prop:Bessel-shallow}, but, for
simplicity, we want to avoid splitting into cases depending
on whether or not \(4 \mid m\).
By \eqref{eq:m-and-depth-Y},
the restrictions to \(\field \setminus \pp^{h - 1}\) of
\(\tfrac1 2(1 + (-1)^h\sgn_\epsilon)
= \tfrac1 2(1 - \gammaun)\)
and
\(\tfrac1 2(1 + \gammaun)\)
are characteristic functions that indicate whether
\(4 \mid m\) or \(4 \nmid m\),
respectively.
(We omit \(\pp^{h - 1}\) because we are concerned with
the case where \(\depth(Y) < r\), so that
\(\ord(s) < r - \tfrac1 2 = h - 1\).)

Thus, if \(\sgn_\varpi(-1) = -1\),
then combining
Proposition \ref{prop:Bessel-shallow},
\eqref{eq:m-shallow-ram}, and
\eqref{eq:sgn-and-G-ram}
gives
\begin{equation}
\tag{${*{*}*}\textsub{ns}$}
\label{eq:basic-Bessel-ram}
\begin{aligned}
\Bessel_{\nu^\alpha}(u, v)
={} &q^{-h/2}\abs s^{-1/2}\times{} \\
& \qquad\Bigl(
	\tfrac1 2\bigl[
		(1 - \gammaun(s)) +
		(1 + \gammaun(s))\gammaram(s)
	\bigr]\times{} \\
& \qquad\qquad\scPhi_{\varpi^{h + 1}}(2\varpi^{-h}s)
	\overset{\text{(\S)}}+{} \\
& \qquad
	\tfrac1 2\bigl[
		(1 - \gammaun(s)) \overset{\text{(\P)}}-
		(1 + \gammaun(s))\gammaram(s)
	\bigr]\times{} \\
& \qquad\qquad\scPhi_{\varpi^{h + 1}}(-2\varpi^{-h}s)\Bigr) \\
={} &\tfrac1 2 q^{-(h + 1)/2}\abs{s\theta'}^{-1/2}\times{} \\
& \qquad\Bigl(
	\bigl[
		(1 + \gammaram(s)) -
		\gammaun(s)(1 - \gammaram(s))
	\bigr]\Phi(\langle X^*, Y\rangle) +{} \\
& \qquad\qquad
	\bigl[
		(1 - \gammaram(s)) -
		\gammaun(s)(1 + \gammaram(s))
	\bigr]\Phi(\langle\Ad^*(\sigma_\varpi)X^*, Y\rangle)
	\Bigr)
\end{aligned}
\end{equation}
and (changing the sign at (\S), but not at (\P)) that
\begin{align*}
\Bessel_{\nu^\alpha\sgn_\varpi}(u, v)
={} &\tfrac1 2 q^{-(h + 1)/2}\abs{s\theta'}^{-1/2}\times{} \\
& \qquad\Bigl(\bigl[
		(1 + \gammaram(s)) -
		\gammaun(s)(1 - \gammaram(s))
	\bigr]\Phi(\langle X^*, Y\rangle) -{} \\
& \qquad\qquad\bigl[
		(1 - \gammaram(s)) -
		\gammaun(s)(1 + \gammaram(s))
	\bigr]\Phi(\langle\Ad^*(\sigma_\varpi)X^*, Y\rangle)\Bigr),
\end{align*}
so that
\begin{equation}
\tag{$\ddag\textsub{ns}$}
\label{eq:hat-Bessel-without-i}
\Bessel_{\nu^\alpha}^\varpi(u, v)
= \tfrac1 2 q^{-(h + 1)/2}\abs{s\theta'}^{-1/2}
\bigl[
	(1 + \gammaram(s)) -
	\gammaun(s)(1 - \gammaram(s))
\bigr]\Phi(\langle X^*, Y\rangle).
\end{equation}
Similarly, if \(\sgn_\varpi(-1) = 1\), then
(changing the sign at (\P), but not at (\S), in
\eqref{eq:basic-Bessel-ram})
we obtain
\begin{equation}
\tag{${*{*}*}\textsub s$}
\begin{aligned}
\Bessel_{\nu^\alpha}(u, v)
={} &
\Bessel_{\nu^\alpha\sgn_\varpi}(u, v) \\
={} &
\tfrac1 2 q^{-(h + 1)/2}\abs{s\theta'}^{-1/2}\bigl[
	(1 + \gammaram(s)) -
	\gammaun(s)(1 - \gammaram(s))
\bigr]\times{} \\
&\qquad\bigl[
	\Phi(\langle X^*, Y\rangle) +
	\Phi(\langle\Ad^*(\sigma_\varpi)X^*, Y\rangle)
\bigr],
\end{aligned}
\end{equation}
so that
\begin{multline}
\tag{$\ddag\textsub s$}
\label{eq:hat-Bessel-with-i}
\Bessel_{\nu^\alpha}^\varpi(u, v)
= \Bessel_{\nu^\alpha}(u, v)
= \tfrac1 2 q^{-(h + 1)/2}\abs{s\theta'}^{-1/2}\bigl[
	(1 + \gammaram(s)) -
	\gammaun(s)(1 - \gammaram(s))
\bigr]\times{} \\
\bigl[
	\Phi(\langle X^*, Y\rangle) +
	\Phi(\langle\Ad^*(\sigma_\varpi)X^*, Y\rangle)
\bigr].
\end{multline}
We may write
\eqref{eq:hat-Bessel-without-i}
and
\eqref{eq:hat-Bessel-with-i}
uniformly as
\begin{multline}
\tag{$\ddag$}
\label{eq:hat-Bessel-anyway}
\Bessel_{\nu^\alpha}^\varpi(u, v)
= \tfrac1 2q^{-(h + 1)/2}\abs{s\theta'}^{-1/2}\bigl[
	(1 + \gammaram(s)) -
	\gammaun(s)(1 - \gammaram(s))
\bigr]\times{} \\
\sum_{\sigma \in N_G(T_\varpi)/T_\varpi}
	\Phi(\langle\Ad^*(\sigma)X^*, Y\rangle).
\end{multline}

Upon combining \eqref{eq:orbital-shallow-ram},
\eqref{eq:hat-Bessel-anyway},
and Lemma \ref{lem:Y-facts}, we obtain
the desired formula by noting that
\begin{multline*}
\bigl[
	(1 + \gammaram(s)) +
	\gammaun(s)(1 - \gammaram(s))
\bigr]\dotm\bigl[
	(1 + \gammaram(s)) -
	\gammaun(s)(1 - \gammaram(s))
\bigr] \\
= (1 + \gammaram(s))^2 -
	\gammaun(s)^2(1 - \gammaram(s))^2
= 4\gammaram(s) = 4\gamma_\Phi(X^*, Y)
\end{multline*}
(since \(\gammaun(s)^2 = 1\)).
\end{proof}

\subsection{The bad shell}
\label{sec:bad-ram}

We shall be concerned in this section with the behaviour of
\(M^G_{X^*}\)
(hence \(\hat\mu^G_{X^*}\), by Proposition
\ref{prop:orbital-integral-integral})
at the `bad shell', i.e., on those regular, semisimple elements
\(Y\) such that \(\depth(Y) = r\).
We shall assume that this is the case throughout the
section.
By \eqref{eq:m-and-depth-Y}, this implies that
\(m = 2\) and that \(\ord(\theta')\) is odd, i.e., that
\(Y\) belongs to a ramified torus.
By \S\ref{sec:tori}, we can in fact assume that
\(\ord(\theta') = 1\).  Then, by
Lemma \ref{lem:Y-facts},
\begin{equation}
\label{eq:ord-s-bad-ram}
\ord(s) = h - 1
\implies
\sgn_\epsilon(s) = (-1)^{h - 1}
\text{ and }
\abs{s\theta'} = q^{-h}.
\end{equation}

By Definition \ref{defn:Wald-i}, the formula that holds in
the situation of Theorem \ref{thm:that-bad-ram} holds also,
suitably understood,
in the situation of Theorem \ref{thm:other-bad-ram}.
We find it useful to separate them anyway.

\begin{rem}
\label{rem:named-torus}
In this section only, we need to name the specific ramified
torus in which we are interested.
We therefore assume in
Theorems \ref{thm:other-bad-ram} and \ref{thm:that-bad-ram}
that \(X^* \in \ttt_\varpi^*\).
See Remark \ref{rem:what-about} for a discussion of how to
handle other ramified tori.
\end{rem}

\begin{thm}
\label{thm:other-bad-ram}
If
\(\depth(X^*) + \depth(Y) = 0\),
and
\(Y\) lies in a ramified torus that is \emph{not}
stably conjugate to \(\bT_\varpi\),
then
\[
M^G_{X^*}(Y)
= \tfrac1 2 q^{-(h + 1)}\dotm q^{-1/2}\abs{D_\gg(Y)}^{-1/2}
\sum_{Z \in (\ttt_\varpi)_{r:r{+}}}
	\Phi(\langle X^*, Z\rangle)\sgn_\varpi(Y^2 - Z^2),
\]
where we identify the scalar matrices \(Y^2\) and \(Z^2\)
with elements of \(\field\) in the natural way.
\end{thm}

\begin{proof}
By \S\ref{sec:tori}, it suffices to consider the case where
\(\theta' = \epsilon\varpi\).

By Corollary \ref{cor:Bessel-twist},
since \(\ord(u) = \ord(v)\),
\[
\Bessel_{\nu^{1/2}}^\varpi(u, v)
= \Bessel_{\nu^{1/2}\sgn_\epsilon}^\varpi(u, v);
\]
and, by Corollary \ref{cor:Bessel-Kloost} and
\eqref{eq:uv-is-square},
\(\Bessel_{\nu^{1/2}\sgn_\varpi}(u, v) = 0\), so
\[
\Bessel_{\nu^{1/2}}^\varpi(u, v)
= \tfrac1 2\Bessel_{\nu^{1/2}}(u, v);
\]
so, by \eqref{eq:third-orbital-ram} and
\eqref{eq:ord-s-bad-ram},
\begin{equation}
\tag{$*$}
\label{eq:orbital-other-bad-ram}
\begin{aligned}
M^G_{X^*}(Y)
={} &
\tfrac1 4\abs s^{-1/2}q^{-(h + 1)/2}\times{} \\
&\qquad\bigl(
	(1 + \gammaram(s)) - (-1)^h\sgn_\epsilon(s)(1 - \gammaram(s))
\bigr)\Bessel_{\nu^{1/2}}(u, v) \\
={} &
\tfrac1 4\abs s^{-1/2}q^{-(h + 1)/2}\dotm2\dotm
\Bessel_{\nu^{1/2}}(u, v) \\
={} &
\tfrac1 2\abs s^{-1/2}\Bessel_{\nu^{1/2}}(u, v).
\end{aligned}
\end{equation}

Finally, another application of
Corollary \ref{cor:Bessel-Kloost},
together with \eqref{eq:ord-u},
gives that
\[
\Bessel_{\nu^{1/2}}(u, v)
= q\inv\sum_{c \in \pp\inv/\pint}
	\scPhi_{\varpi^{h + 1}}(2c)
	\sgn_\varpi(c^2 - (\varpi^{-h}s)^2\epsilon)
\]
Replacing \(c\) by \(\varpi^{-h}c\),
and using \eqref{eq:ord-s-bad-ram} again,
allows us to re-write
\begin{equation}
\tag{$**$}
\label{eq:Bessel-other-bad-ram}
\Bessel_{\nu^{1/2}}(u, v)
= q^{-(h + 2)/2}\abs{s\theta'}^{-1/2}
\sum_{c \in \pp^{h - 1}/\pp^h}
	\Phi(2\beta\varpi c)
	\sgn_\varpi(c^2 - s^2\epsilon).
\end{equation}
By Definition \ref{defn:filt-and-depth},
the isomorphism \(c \mapsto c\dotm\sqrt\varpi\)
of \(\field\) with \(\ttt_\varpi\) identifies
\(\pp^{h - 1}/\pp^h\) with
\((\ttt_\varpi)_{(h - 1/2):(h + 1/2)}
= (\ttt_\varpi)_{r:r{+}}\).
If \(c\) is mapped to \(Z\), then
(by \eqref{eq:hilarious-trace}, for example)
\(2\beta\varpi c = \langle X^*, Z\rangle\),
and
\[
\sgn_\varpi(c^2 - s^2\epsilon)
= \sgn_\varpi(s^2\epsilon\varpi - c^2\varpi)
= \sgn_\varpi(Y^2 - Z^2).
\]
Combining this observation with
\eqref{eq:orbital-other-bad-ram},
\eqref{eq:Bessel-other-bad-ram},
and
Lemma \ref{lem:Y-facts}
yields the desired formula.
\end{proof}

\begin{thm}
\label{thm:that-bad-ram}
If
\(\depth(X^*) + \depth(Y) = 0\),
and
\(\widetilde Y\) is a stable conjugate of \(Y\)
that lies in a torus with \(X^*\),
then
\begin{align*}
M^G_{X^*}(Y) ={}
& \tfrac1 2 q^{-(h + 1)}\abs{D_\gg(Y)}^{-1/2}\times{} \\
& \qquad\Bigl(
\gamma_\Phi(X^*, Y)
\sum_{\sigma \in W(\bG, \bT_\varpi)}
	\Phi(
		\langle\Ad^*(\sigma)X^*, \widetilde Y\rangle
	) +{} \\
& \qquad\qquad q^{-1/2}\sum_{\substack{
	Z \in (\ttt_\varpi)_{r:r{+}} \\
	Z \ne \pm\widetilde Y
}}
	\Phi(\langle X^*, Z\rangle)
	\sgn_\varpi(Y^2 - Z^2)\Bigr),
\end{align*}
where \(\gamma_\Phi(X^*, Y)\) is as in
Definition \ref{defn:Wald-i}.
\end{thm}

\begin{proof}
Implicit in the statement is the hypothesis that
\(\ttt = \ttt_{\theta'}\) is stably conjugate to
\(\ttt_\varpi\), so that, by \S\ref{sec:tori}, we have
\(\theta' = x^2\varpi\) for some \(x \in \pint\mult\).
The proof proceeds much as in Proposition
\ref{thm:other-bad-ram}.

By
\eqref{eq:ord-s-bad-ram}
and
Corollary \ref{cor:Bessel-twist},
since \(\ord(u) = \ord(v)\),
\eqref{eq:third-orbital-ram} becomes
\begin{equation}
\tag{$*$}
\label{eq:orbital-that-bad-ram}
M^G_{X^*}(Y)
= \abs s^{-1/2}q^{-(h + 1)/2}
\Bessel_{\nu^{1/2}}^\varpi(u, v).
\end{equation}
By \eqref{eq:uv}, we may take the square root \(w\) of \(u v\)
to be \(w = \varpi^{-h}x s\).

Combining
Corollary \ref{cor:Bessel-Kloost}
with
\eqref{eq:uv},
\eqref{eq:ord-u},
and
\eqref{eq:ord-s-bad-ram}
gives
\begin{align*}
\Bessel_{\nu^\alpha}(u, v)
={} &
q\inv\sum_{\substack{
	c \in \pp\inv/\pint \\
	c \ne \pm\varpi^{-h}x s
}}
	\scPhi_{\varpi^{h + 1}}(2c)
	\sgn_\varpi(c^2 - (\varpi^{-h}x s)^2) \\
={} &
q\inv\sum_{\substack{
	c \in \pp^{h - 1}/\pp^h \\
	c \ne \pm x s
}}
	\Phi(2\beta\varpi c)
	\sgn_\varpi(c^2 - x^2 s^2) \\
={} &
q^{-(h + 2)/2}\abs{s\theta'}^{-1/2}
\sum_{\substack{
	c \in \pp^{h - 1}/\pp^h \\
	c \ne \pm x s
}}
	\Phi(2\beta\varpi c)
	\sgn_\varpi(c^2 - x^2 s^2).
\end{align*}
Note that
\(Y^2 = s^2\theta' = x^2 s^2\varpi\),
and that
\[
\widetilde Y \ldef x s\sqrt\varpi
= \Ad\begin{pmatrix}
\sqrt x & 0           \\
0       & \sqrt x\inv
\end{pmatrix}Y
\]
is a stable conjugate of \(Y\) that lies in
\(\ttt_\varpi\).
(Here, \(\sqrt\varpi\) is an element of \(\gg\),
but \(\sqrt x\) is an element of an extension field of
\(\field\).)
As in Theorem \ref{thm:other-bad-ram},
if \(Z = c\dotm\sqrt\varpi\), then
\(\langle X^*, Z\rangle = 2\beta\varpi c\)
and
\(\sgn_\varpi(c^2 - x^2 s^2)
= \sgn_\varpi(Y^2 - Z^2)\).
That is, upon using again the bijection
\(\pp^{h - 1}/\pp^h \to (\ttt_\varpi)_{r:r{+}}\)
given by
\(c \mapsto c\dotm\sqrt\varpi\),
we obtain
\begin{equation}
\tag{${**}_1$}
\label{eq:Bessel-1-bad-ram}
\Bessel_{\nu^{1/2}}(u, v)
= q^{-(h + 2)/2}\abs{s\theta'}^{-1/2}
\sum_{\substack{
	Z \in (\ttt_\varpi)_{r:r{+}} \\
	Z \ne 0, \pm\widetilde Y
}}
	\Phi(\langle X^*, Z\rangle)
	\sgn_\varpi(Y^2 - Z^2).
\end{equation}
Similarly, combining Corollary \ref{cor:Bessel-Kloost} with
\eqref{eq:ord-u},
Lemma \ref{lem:Y-facts},
and
\eqref{eq:sgn-and-G-ram}
gives
\begin{equation}
\tag{${**}_\varpi$}
\label{eq:Bessel-varpi-bad-ram}
\begin{aligned}
\Bessel_{\nu^{1/2}\sgn_\varpi}(u, v)
& {}= q^{-1/2}\gammaram(s)\bigl(
	\Phi(2\beta\varpi x s) +
	\Phi(-2\beta\varpi x s)
\bigr) \\
& {}= q^{-(h + 1)/2}\abs{s\theta'}^{-1/2}\gammaram(s)
\sum_{\sigma \in W(\bG, \bT)}
	\Phi(
		\langle\Ad^*(\sigma)X^*, \widetilde Y\rangle
	).
\end{aligned}
\end{equation}
Combining
\eqref{eq:orbital-that-bad-ram},
\eqref{eq:Bessel-1-bad-ram},
\eqref{eq:Bessel-varpi-bad-ram},
and
Lemma \ref{lem:Y-facts}
gives the desired formula.
\end{proof}

\subsection{Close to zero}
\label{sec:close-ram}

\begin{thm}
\label{thm:close-ram}
If
\(\depth(X^*) + \depth(Y) > 0\),
and
\(X^*\) is ramified,
then let
\(\gamma_\Phi(X^*, Y)\) and \(c_0(X^*)\) be as in
Definitions \ref{defn:Wald-i} and \ref{defn:const},
respectively.
Then
%
\[
M^G_{X^*}(Y)
= c_0(X^*) +
q^{-(h + 1)}\abs{D_\gg(Y)}^{-1/2}\gamma_\Phi(X^*, Y).
\]
\end{thm}

\begin{proof}
By \eqref{eq:m-and-depth-Y}, \(m < 2\).

By Proposition \ref{prop:Bessels} and \eqref{eq:gamma-square},
using Notation \ref{notn:Bessel-abbrev},
\eqref{eq:third-orbital-ram} becomes
\begin{align*}
M^G_{X^*}(Y)
= \tfrac1 2\Bigl(&
\tfrac1 2(1 + \gammaram(s))\bigl[
	Q_3(q^{-1/2}) + \gammaram(s)q\inv;
	1 + \gammaram(s)\inv\sgn_\varpi
\bigr] +{} \\
&\qquad\tfrac1 2(1 - \gammaram(s))\bigl[
	-Q_3(-q^{-1/2}) + \gammaram(s)q\inv;
	(1 - \gammaram(s)\inv\sgn_\varpi)\sgn_\epsilon
\bigr]
\Bigr)(\theta').
\end{align*}
By \eqref{eq:inv-Q3} and the fact that
\[
Q_3(q^{-1/2}) - Q_3(-q^{-1/2})
	= -2T(T^2 + 1)\bigr|_{T = q^{-1/2}}
	= -2q^{-3/2}(q + 1),
\]
we may check case-by-case to see that this simplifies to
\[
M^G_{X^*}(Y)
= \bigl[
	-\tfrac1 2 q^{-3/2}(q + 1);
	1, 0, \gammaram(s), 0
\bigr](\theta').\qedhere
\]
\end{proof}

\section{An integral formula}
\label{sec:orbital-redux}

Remember that all our efforts so far have focussed on the
computation of the function \(M^G_{X^*}\) of Definition
\ref{defn:mock-mu}, whereas we are really interested in the
function \(\hat\mu^G_{X^*}\) of Notation \ref{notn:mu-hat}.
We are now in a position to show that they are actually
equal.

\begin{lemma}
\label{lem:Jacobian}
If \(f \in L^1(G)\), then
\[
\int_G f(g)dg
= \int_{\field_\theta\mult} \int_\field
	f\bigl(\varphi_\theta\inv(\alpha, t)\bigr)
\textup dt\,\textup d\mult\alpha.
\]
\end{lemma}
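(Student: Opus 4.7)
The plan is a direct change-of-variables computation on the Zariski-dense open subset \(U = \set{g \in G}{g_{11} \ne 0}\). I will parametrise \(U\) by \((a, b, c) \in \field\mult \times \field \times \field\) via \(g = \begin{smallpmatrix} a & b \\ c & (1 + bc)/a \end{smallpmatrix}\), and first verify that in these coordinates a left-Haar measure is \(\textup dg = \abs a\inv\,\textup da\,\textup db\,\textup dc\); this is a one-step check of left-invariance under each of the subgroups of \(G\) of upper-unipotent, lower-unipotent, and diagonal matrices (which together generate \(G\)), since each such translation has a triangular Jacobian whose determinant cancels precisely against the change in \(\abs a\inv\). I will take this as the normalisation of \(\textup dg\) implicit in the lemma.

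Next, writing \(\alpha = x + y\sqrt\theta\) with \(x, y \in \field\), the additive Haar measure on \(\field_\theta\) is \(\textup d\alpha = \textup dx\,\textup dy\), and the corresponding multiplicative Haar measure on \(\field_\theta\mult\) is \(\textup d\mult\alpha = \abs{\Norm_\theta(\alpha)}\inv\,\textup dx\,\textup dy\) (in analogy with Definition \ref{defn:k-Haar}, and uniformly whether \(\field_\theta\) is a field or is split). The formula for \(\varphi_\theta\inv\) given in the excerpt reads, in these coordinates, \(a = x\), \(b = y\), and \(c = \Norm_\theta(\alpha)\inv(xt + y\theta)\), so the Jacobian of \((x, y, t) \mapsto (a, b, c)\) is triangular with diagonal entries \((1, 1, x/\Norm_\theta(\alpha))\). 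Consequently
\[
\textup dg
 = \abs a\inv\,\textup da\,\textup db\,\textup dc
 = \abs x\inv \cdot \frac{\abs x}{\abs{\Norm_\theta(\alpha)}}\,\textup dx\,\textup dy\,\textup dt
 = \abs{\Norm_\theta(\alpha)}\inv\,\textup dx\,\textup dy\,\textup dt
 = \textup d\mult\alpha\,\textup dt.
\]

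Finally, the complements \(G \setminus U\) and \((\field_\theta\mult \times \field) \setminus \varphi_\theta(U)\) are each cut out by the vanishing of a single non-trivial polynomial, so they have measure zero; integrating any \(L^1\)-function over either side against the corresponding measure therefore produces the same value, yielding the claimed identity. The main obstacle will really only be the bookkeeping of Haar-measure normalisations, in particular ensuring that my implicit choice of \(\textup d\mult\alpha\) agrees with the convention of the lemma's statement; the Jacobian computation itself is elementary thanks to its triangular structure.
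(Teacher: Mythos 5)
Your proposal is correct and follows essentially the same route as the paper: the same coordinate chart \((a,b,c)\mapsto\begin{smallpmatrix}a&b\\c&(1+bc)/a\end{smallpmatrix}\), the same Jacobian computation for \(\varphi_\theta\), and the same identification \(\abs a\inv\,\textup da\,\textup db\,\textup dc \mapsto \abs{\Norm_\theta(\alpha)}\inv\,\textup d\alpha\,\textup dt = \textup d\mult\alpha\,\textup dt\). The only additions are your explicit verification that \(\abs a\inv\,\textup da\,\textup db\,\textup dc\) is Haar and the measure-zero remark, both of which the paper leaves implicit (and the normalisation worry is moot, since the paper explicitly allows the third measure to be chosen to make the identity hold).
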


In the preceding lemma, \(\textup dg\), \(\textup dt\), and
\(\textup d\mult\alpha\) are Haar measures on the obvious
groups.  Given any two of them, the third can be chosen
so that the identity is satisfied.  Since
Definition \ref{defn:mu} requires a measure on
\(G/C_G(X^*)\), not on \(G\), we do not spend much time here
worrying about normalisations (although a specific one is
used in the proof).

\begin{proof}
With respect to the co-ordinate charts
\((a, b, c) \mapsto \begin{smallpmatrix}
a & b          \\
c & (1 + bc)/a
\end{smallpmatrix}\) (for \(a \ne 0\)) on \(G\)
and
\((a, b, t) \mapsto (a + b\sqrt\theta, t)\)
on \(\field_\theta\mult \times \field\),
the Jacobian of \(\varphi_\theta\) at
\(g = \begin{smallpmatrix}
a & b \\
c & d
\end{smallpmatrix}\) (with \(a \ne 0\))
is \(a\inv \Norm_\theta(\alpha)\),
where \(\varphi_\theta(g) = (\alpha, t)\).

In particular, the Haar measure
\[
\abs a\inv\textup da\,\textup db\,\textup dc
\]
on \(G\) is carried to the measure
\[
\abs{\Norm_\theta(a + b\sqrt\theta)}\inv
	\textup da\,\textup db\,\textup dt
= \abs{\Norm_\theta(\alpha)}\inv
	\textup d\alpha\,\textup dt
= \textup d\mult\alpha\,\textup dt
\]
on \(\field_\theta\mult \times \field\),
as desired.
\end{proof}

\begin{prop}
\label{prop:orbital-integral-integral}
If \(X^* \in \gg^*\) and \(Y \in \gg\) are regular and
semisimple, then
\[
\hat\mu^G_{X^*}(Y) = M^G_{X^*}(Y),
\]
where \(M^G_{X^*}\) is as in Definition \ref{defn:mock-mu},
and the Haar measure \(\textup d\dot g\) on \(G/C_G(X^*)\)
of Notation \ref{notn:orbit-measure}
is normalised so that
\[
\meas_{\textup d\dot g}(\dot K) = \begin{cases}
q\inv(q + 1), &
	\text{\(X^*\) split} \\
q\inv(q - 1), &
	\text{\(X^*\) unramified} \\
\tfrac1 2 q^{-2}(q^2 - 1), &
	\text{\(X^*\) ramified,}
\end{cases}
\]
where \(\dot K\) is the image in \(G/C_G(X^*)\) of
\(\SL_2(\pint)\).
\end{prop}

\begin{proof}
We will maintain Notation \ref{notn:X*}.
In particular, \(X^* \in \ttt_\theta^*\).

By the explicit formul\ae\ of the previous sections
(specifically, Theorems
\ref{thm:vanish-spun}, \ref{thm:shallow-spun},
\ref{thm:close-spun},
\ref{thm:vanish-ram}, \ref{thm:shallow-ram},
\ref{thm:other-bad-ram}, \ref{thm:that-bad-ram},
and
\ref{thm:close-ram}),
\(M^G_{X^*} \in C^\infty(\gg\rss)\).
This result plays the role of
\cite{adler-debacker:mk-theory}*{Corollary A.3.4};
we now imitate the proof of Theorem A.1.2 \loccitthendot.

If \(f \in C_c(\gg\rss)\), then there is a lattice
\(\mc L \subseteq \gg\) such that
\(f\dotm M^G_{X^*}\) is invariant under translation by \mc L.
Then
\[
\int_\gg f(Y)M^G_{X^*}(Y)\textup dY
= \meas_{\textup dY}(\mc L)\sum_{Y \in \gg/\mc L}
	f(Y)\dotm\Pint_{\field_\theta\mult/C_\theta} \Pint_\field
		\Phi(\langle X^*, Y\rangle_{\alpha, t})
	\textup dt\,\textup d\mult\dot\alpha.
\]
Since the sum is finitely supported, we may bring it
inside the integral.
By \eqref{eq:hilarious-trace} and
Definition \ref{defn:mu-hat},
\begin{equation}
\tag{$*$}
\label{eq:funny-integrated}
\begin{aligned}
& \int_\gg f(Y)M^G_{X^*}(Y)\textup dY \\
& \qquad= \Pint_{\field_\theta\mult/C_\theta} \Pint_\field
	\meas_{\textup dY}(\mc L)\sum_{Y \in \gg/\mc L}
		f(Y)\Phi\bigl(\langle
			\Ad^*(\varphi_\theta\inv(\alpha, t))X^*,
			Y
		\rangle\bigr)
\textup dt\,\textup d\mult\dot\alpha \\
& \qquad= \Pint_{\field_\theta\mult/C_\theta} \Pint_\field
	\int_\gg f(Y)\Phi\bigl(\langle
		\Ad^*(\varphi_\theta\inv(\alpha, t))X^*,
		Y
	\rangle\bigr)\textup dY\,
\textup dt\,\textup d\mult\dot\alpha \\
& \qquad= \Pint_{\field_\theta\mult/C_\theta} \Pint_\field
	\hat f\bigl(
		\Ad^*(\varphi_\theta\inv(\alpha, t))X^*
	\bigr)
\textup dt\,\textup d\mult\dot\alpha,
\end{aligned}
\end{equation}
where \(\varphi_\theta\) is as in
Definition \ref{defn:sl2-as-fields}.

On the other hand, again by Definition \ref{defn:mu-hat},
\begin{align*}
\hat\mu^G_{X^*}(f) \ldef \mu^G_{X^*}(\hat f)
& = \int_{G/T_\theta}
	\hat f\bigl(\Ad^*(g)X^*\bigr)
\textup d\dot g \\
& = \int_{\ol U\backslash G/T_\theta} \int_{\ol U}
	\hat f\bigl(\Ad^*(\ol u g)X^*\bigr)
\textup d\ol u\,\textup d\ddot g,
\end{align*}
where \(\ol U = \set{\begin{smallpmatrix}
1 & 0 \\
b & 1
\end{smallpmatrix}}{b \in \field}\).
By Lemmata \ref{lem:Jacobian} and \ref{lem:torus-acts},
and \eqref{eq:funny-integrated},
if \(\textup d\dot g\) is properly normalised, then
\[
\hat\mu^G_{X^*}(f)
= \int_{\field_\theta\mult/C_\theta} \int_\field
	\hat f\bigl(
		\Ad^*(\varphi_\theta\inv(\alpha, t))X^*
	\bigr)
\textup dt\,\textup d\mult\dot\alpha
= \int_\gg f(Y)M^G_{X^*}(Y)\textup dY.
\]

It remains only to compute the normalisation of
\(\textup d\dot g\).  We do so case-by-case.
If \(X^*\) is split, so that we may take \(\theta = 1\),
then the image under \(\varphi_1\) of
\[
(1 + \pp_1) \times \pp
\subseteq \field_1\mult \times \field
\]
is precisely the kernel \(K_+\) of the
(component-wise) reduction map
\(\SL_2(\pint) \to \SL_2(\resfld)\).
Here, we have written
\(1 + \pp_1 = \set{(a, b) \in \field_1}{
	a \in 1 + \pp, b \in \pp
}\).
Thus,
\[
(1 + \pp_1)C_1/C_1 \times \pp
\overset\sim\rightarrow
K_+T_1/T_1.
\]
Now
\(\Norm_1 : 1 + \pp_1 \to 1 + \pp\) is surjective,
so, by Definitions \ref{defn:k-Haar} and \ref{defn:mock-mu},
the measure (in \(\field_1/C_1 \times \field\)) of the
domain is
\[
\meas_{\textup d\mult x}(1 + \pp)\dotm
	\meas_{\textup dx}(\pp)
= q^{-2}.
\]
Since \(\dot K = \SL_2(\pint)T_1/T_1\) is tiled by
\[
\bigl[ \SL_2(\pint)T_1 \colon K_+T_1 \bigr]
= \bigl[ \SL_2(\pint) \colon K_+(T_1 \cap \SL_2(\pint)) \bigr]
= \bigl[ \SL_2(\resfld) \colon \ms T_1(\resfld) \bigr]
= q(q + 1)
\]
copies of \(K_+T_1/T_1\),
where \(\ms T_1 \ldef \set{\begin{smallpmatrix}
a & b \\
b & a
\end{smallpmatrix}}{a^2 - b^2 = 1}\),
we see that, in this case,
\(\textup d\dot g\) assigns \(\dot K\) measure
\(q^{-2}\dotm q(q + 1) = q\inv(q + 1)\).

The remaining cases are easier, since
\(C_\theta\) is contained in the ring \(\pint_\theta\)
of integers in \(\field_\theta\),
and (for our choices of \(\theta\))
\(T_\theta\) is contained in \(\SL_2(\pint)\).
If \(X^*\) is unramified, so that we may take
\(\theta = \epsilon\), then the image under
\(\varphi_\epsilon\) of \(\pint_\epsilon\mult \times \pint\)
is precisely \(\SL_2(\pint)\).
Since
\(\Norm_\epsilon : \pint_\epsilon\mult \to \pint\mult\)
is surjective,
we see that, in this case,
\(\textup d\dot g\) assigns \(\dot K\) measure
\(\meas_{\textup d\mult x}(\pint_\epsilon\mult)
	\dotm\meas_{\textup dx}(\pint) = q\inv(q - 1)\).

If \(X^*\) is ramified, so that we may take
\(\theta = \varpi\), then the image under
\(\varphi_\varpi\) of \(\pint_\varpi\mult \times \pp\) is
precisely the Iwahori subgroup \mc I,
i.e., the pre-image in \(\SL_2(\pint)\) of
\(\ms B(\resfld) \ldef \set{\begin{smallpmatrix}
a & b     \\
0 & a\inv
\end{smallpmatrix}}{a \in \resfld\mult, b \in \resfld}\)
under the reduction map
\(\SL_2(\pint) \to \SL_2(\resfld)\).
Since
\(\Norm_\varpi : \pint_\varpi\mult \to \pint\mult\)
has co-kernel of order \(2\),
we see that, in this case,
\(\textup d\dot g\) assigns \(\dot K\) measure
\[
\tfrac1 2\meas_{\textup d\mult x}(\pint\mult)\dotm
	\meas_{\textup dx}(\pp)\dotm
\bigl[ \SL_2(\resfld) \colon \ms B \bigr]
= \tfrac1 2q^{-2}(q^2 - 1).\qedhere
\]
\end{proof}

In particular, all the results we have proven for
\(M^G_{X^*}\) are actually results about
\(\hat\mu^G_{X^*}\).  We close by summarising some results
that can be stated in a fairly uniform fashion
(i.e., mostly independent of the `type' of \(X^*\), in the
sense of Definition \ref{defn:split-un-or-ram}).
This theorem does \emph{not} cover everything we have shown
about Fourier transforms of semisimple orbital integrals
(in particular, it says nothing about the behaviour of
ramified orbital integrals on the `bad shell', as in
\S\ref{sec:bad-ram});
for that, the reader should refer to the detailed results
of \S\S\ref{sec:orbital-spun}--\ref{sec:orbital-ram}.

\begin{thm}
\label{thm:uniform}
If \(\depth(X^*) + \depth(Y) < 0\)
(or \(X^*\) is split or unramified and
\(\depth(X^*) + \depth(Y) \le 0\)),
then
\[
\hat\mu^G_{X^*}(Y)
= q^{-(\scdepth + 1)}\abs{D_\gg(Y)}^{-1/2}
\gamma_\Phi(X^*, Y)
\sum_{\sigma \in W(G, T)}
	\Phi(\langle\Ad^*(\sigma)X^*, Y\rangle)
\]
if \(X^*\) and \(Y\) lie in a common torus \bT
(with \(T = \bT(\field)\)),
and
\[
\hat\mu^G_{X^*}(Y) = 0
\]
if \(X^*\) and \(Y\) do not lie in \(G\)-conjugate tori.
Here, \(\scdepth\) is as in
Notation \ref{notn:depth-and-scdepth},
and \(\gamma_\Phi(X^*, Y)\) is as in
Definition \ref{defn:Wald-i}.

If \(\depth(X^*) + \depth(Y) > 0\),
then
\[
\hat\mu^G_{X^*}(Y)
= c_0(X^*) +
q^{-(\scdepth + 1)}\abs{D_\gg(Y)}^{-1/2}\gamma_\Phi(X^*, Y).
\]
Here,
\(\gamma_\Phi(X^*, Y)\) and \(c_0(X^*)\)
are as in
Definitions \ref{defn:Wald-i} and \ref{defn:const},
respectively.
\end{thm}

\begin{proof}
This is an amalgamation of parts of
Theorems
	\ref{thm:vanish-spun}, \ref{thm:shallow-spun},
	\ref{thm:close-spun},
	\ref{thm:vanish-ram}, \ref{thm:shallow-ram},
and \ref{thm:close-ram},
and Proposition \ref{prop:orbital-integral-integral}.
\end{proof}

\begin{bibdiv}
\begin{biblist}
\bibselect{references}
\end{biblist}
\end{bibdiv}
\end{document}